\newtheorem{thm}{Theorem}
\newtheorem{lemma}{Lemma}
\newtheorem{prop}{Proposition}
\newtheorem{defn}{Definition}
\newtheorem{remark}{Remark}
\begin{document}

\title[Tied Links in various topological settings]
  {Tied Links in various topological settings}

\author{Ioannis Diamantis}
\address{China Agricultural University,
International College Beijing, No.17 Qinghua East Road, Haidian District,
Beijing, {100083}, P. R. China.}
\email{ioannis.diamantis@hotmail.com}

\keywords{tied links, handlebody, solid torus, lens spaces, parting, mixed links, mixed braids, tied mixed braids, 3-manifolds, combing.}
\subjclass[2020]{57K10, 57K12, 57K14, 57K35, 57K45, 57K99, 20F36, 20F38, 20C08}

\setcounter{section}{-1}

\date{}

\begin{abstract}
Tied links in $S^3$ were introduced by Aicardi and Juyumaya as standard links in $S^3$ equipped with some non-embedded arcs, called {\it ties}, joining some components of the link. Tied links in the Solid Torus were then naturally generalized by Flores. In this paper we study this new class of links in other topological settings. More precisely, we study tied links in the lens spaces $L(p,1)$, in handlebodies of genus $g$, and in the complement of the $g$-component unlink. We introduce the tied braid monoids $TM_{g, n}$ by combining the algebraic mixed braid groups defined by Lambropoulou and the tied braid monoid, and we formulate and prove analogues of the Alexander and the Markov theorems for tied links in the 3-manifolds mentioned above. We also present an $L$-move braid equivalence for tied braids and we discuss further research related to tied links in knot complements and c.c.o. 3-manifolds. The theory of tied links has potential use in some aspects of molecular biology.
\end{abstract}

\maketitle

\section{Introduction}\label{intro}

Tied links were introduced in \cite{AJ1} as a generalization of links in $S^3$ forming a new class of knotted objects. They are classical links equipped with {\it ties}, i.e. non-embedded arcs joining some components of the link. Tied links are obtained by considering the closure of tied braids, which appear from a diagrammatic interpretation of the defining generators of the {\it algebra of braids and ties} introduced and studied in \cite{Ju, AJ2}. These algebras are related to Coxeter groups of type A. In \cite{F}, the author generalizes the concept of tied links in the Solid Torus in the context of Coxeter groups of type B. In this paper we generalize the concept of tied links in handlebodies of genus $g$, $H_g$, and in other 3-manifolds, such as the lens spaces $L(p,1)$ and the complement of the $g$-component unlink and we present the analogues of the Alexander and the Markov theorems for tied links in these 3-manifolds. It is worth mentioning that tied links have potential applications to molecular biology and in particular to the topology of proteins, where ties represent the chemical bonds in a molecule, in an analogous way as bonded knotoids (see \cite{GGLDSK}).

\smallbreak

We consider $H_g$ to be $S^3\backslash \{{\rm open\ tubular\ neighborhood\ of}\ I_g \}$, where $I_g$ denotes the point-wise fixed identity braid on $g$ indefinitely extended strands meeting at the point at infinity. Thus, $H_g$ may be represented in $S^3$ by the braid $I_g$ (for an illustration see Figure~\ref{h3}). Similarly, $\hat{I}_g$, the standard closure of the identity braid on $g$-strands, represents the complement of the $g$-component unlink in $S^3$, denoted by $S^3\backslash \hat{I}_g$. 

\begin{figure}[ht]
\begin{center}
\includegraphics[width=2.1in]{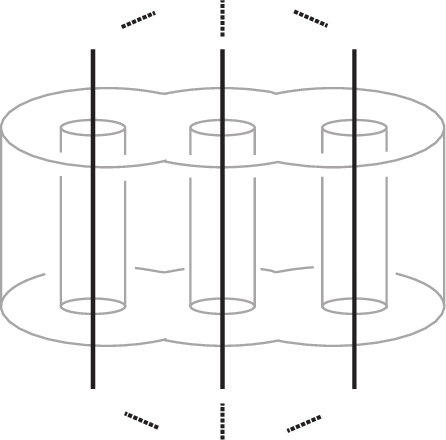}
\end{center}
\caption{A handlebody of genus 3.}
\label{h3}
\end{figure}

As explained in \cite{LR1, LR2, La1, OL, DL1}, an oriented link $L$ in $H_g$ can be represented by an oriented \textit{mixed link} in $S^{3}$, that is, a link in $S^{3}$ consisting of the fixed part $I_g$ and the moving part $L$ that links with $I_g$ (see Figure~\ref{kHg} for an example of a mixed link in $H_3$). A \textit{mixed link diagram} is a diagram $I_g\cup \widetilde{L}$ of $I_g\cup L$ on the plane of $I_g$, where this plane is equipped with the top-to-bottom direction of $I_g$. Similarly, a link in $S^3\backslash \hat{I}_g$ can be seen as a mixed link in $S^3$ as illustrated in Figure~\ref{kUg} for the case of $S^3\backslash \hat{I}_3$.

\begin{figure}[ht]
  \begin{subfigure}[b]{0.35\textwidth}
    \includegraphics[width=\textwidth]{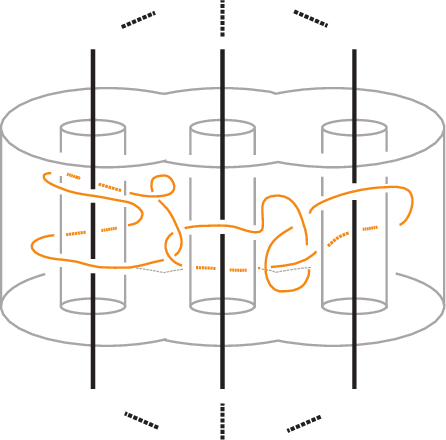}
    \caption{A mixed link in $H_3$.}
    \label{kHg}
  \end{subfigure}
  \begin{subfigure}[b]{0.48\textwidth}
    \includegraphics[width=\textwidth]{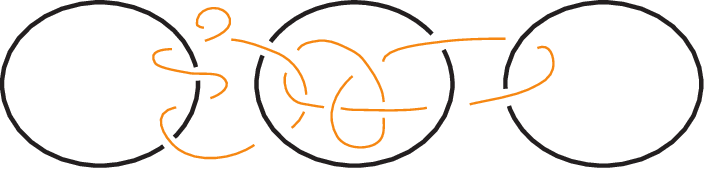}
    \caption{A mixed link in $S^3\backslash \hat{I}_3$.}
    \label{kUg}
  \end{subfigure}
\end{figure}

\smallbreak

For the lens spaces $L(p, 1)$, it is a well known result that these spaces can be obtained from $S^3$ by surgery on the unknot with surgery coefficient $p$. Surgery along the unknot can be realized by considering first the complementary solid torus and then attaching to it a solid torus according to some homeomorphism on the boundary. Thus, a link in $L(p, 1)$ can be visualized as a mixed link in $S^3$, where the fixed part of the mixed link is just $\hat{I}_1$, the unknot, that represents the complementary solid torus in $S^3$.

\bigbreak

In this paper we study isotopy of tied (oriented) links in the above mentioned 3-manifolds, $M$, as well as tied braid equivalence via the concept of tied mixed links, tied mixed braids and tied mixed braid monoids. The paper is organized as follows: In \S~\ref{basics} we recall the knot theory of $M$, emphasizing on the analogue of the Alexander and the (geometric) Markov theorems. We also recall the algebraic mixed braids $B_{g, n}$ in order to present the algebraic version for braid equivalence in $M$. In \S~\ref{sectl} we present some basic results on tied links in $S^3$ (\cite{AJ1}) and ST (\cite{F}) that are necessary for the rest of the paper. In particular, we present the tied braid monoid of type A and of type B, we recall some properties and relations that ties satisfy, which lead to the analogue of the Alexander and the Markov theorems for tied links in $S^3$ (Theorems~\ref{alextls3} \& \ref{marktls3}) and ST (Theorems~\ref{alextlst} \& \ref{marktlst}). Moreover, with the use of the $L$-moves, that we introduce for tied braids, we present $L$-move braid equivalences for tied braids in $S^3$ and in ST. In \S~\ref{seclens} we introduce the concept of tied links in the lens spaces $L(p,1)$, we define tie isotopy by introducing braid band moves on tied links, called tied braid band moves, and we translate tie isotopy on the level of tied mixed braids (Theorem~\ref{markovlens}). Then, in \S~\ref{secM} we study tied links in $H_g$ and $S^3\backslash \hat{I}_g$ by introducing the tied algebraic mixed braid monoids $TM_{g, n}$ (Definition~\ref{tmbmg}), the generalized fixed ties (see Figure~\ref{gt1}) and the properties they satisfy. Finally, in \S~\ref{tlknotc} we discuss the concept of tied links in knot complements by taking into consideration the technique of combing and how ties are affected by this technique (Figure~\ref{combft}). The study of tied links in knot complements is the first step toward the study of tied links in c.c.o. 3-manifolds obtained from $S^3$ by surgery along a knot and that is the subject of a sequel paper.

\section{Knot theory in $H_g$ \& $S^3\backslash \hat{I}_g$}\label{basics}

\subsection{Mixed links and isotopy in $H_g$ and $S^3\backslash \hat{I}_g$}\label{mlisH}

From now on, we will represent the handlebody $H_g$ in $S^3$ by the identity braid $I_g$ and a link $L$ in $H_g$ as a mixed link $I_g \bigcup L$ in $S^3$. Similarly, the complement of the g-component unlink will be represented by $\hat{I}_g$, the (standard) closure of the identity braid $I_g$, and a link $L^{\prime}$ in $S^3\backslash \hat{I}_g$ by a mixed link $\hat{I}_g \bigcup L^{\prime}$ in $S^3$.

\begin{defn}\rm
Two oriented links in $H_g$ or in $S^3\backslash \hat{I}_g$ are {\it isotopic} if and only if any two corresponding mixed link diagrams of theirs in $S^3$ differ by a finite sequence of planar $\Delta$-moves, the three Reidemeister moves and the extended Reidemeister moves, that involve the moving and the fixed part of the mixed link (see Figure~\ref{extreid}).
\end{defn}

\begin{figure}[ht]
\begin{center}
\includegraphics[width=4.3in]{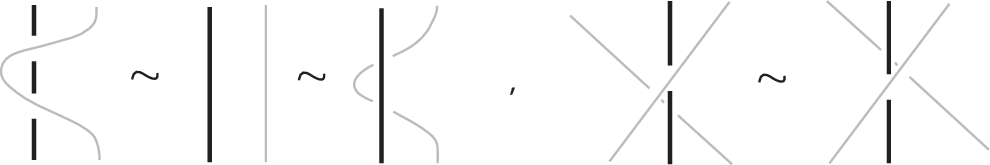}
\end{center}
\caption{Extended Reidemeister moves.}
\label{extreid}
\end{figure}

\subsection{Mixed braids and equivalence in $H_g$ and $S^3\backslash \hat{I}_g$}

Let $M$ denote $H_g$ or $S^3\backslash \hat{I}_g$. In order to translate isotopy for links in $M$ into braid equivalence, we first need to introduce the notion of geometric mixed braids. 

\begin{defn}\rm
A \textit{geometric mixed braid} related to $M$ and to a link $L$ in $M$ is an element of the group $B_{g+n}$, where $g$ strands form the fixed
braid $I_g$ representing $M$ and $n$ strands form the {\it moving subbraid\/} $\beta$ representing the link $L$ in $M$. For an illustration see the middle part of Figure~\ref{mixedbraidsLmoves}. Moreover, a diagram of a geometric mixed braid is a braid diagram projected on the plane of $I_g$.
\end{defn}

The main difference between a geometric mixed braid in $H_g$ and a geometric mixed braid in $S^3\backslash \hat{I}_g$ is the {\it closing} operation that is used to obtain mixed links from geometric mixed braids. In the case of $S^3\backslash \hat{I}_g$, the closing operation is defined in the usual sense (as in the case of classical braids in $S^3$), while in the case of $H_g$ the strands of the fixed part $I_g$ do not participate in the closure operation. In particular, the closure operation in $H_g$ is defined as follows:

\begin{defn}\rm
The {\it closure} $C(I_g \cup B)$ of a geometric mixed braid $I_g \cup B$ in $H_g$ is realized by joining each pair of corresponding endpoints of the moving part by a vertical segment, either over or under the rest of the braid and according to the label attached to these endpoints (see Figure~\ref{bHg}).
\end{defn}

\begin{figure}[ht]
\begin{center}
\includegraphics[width=5.6in]{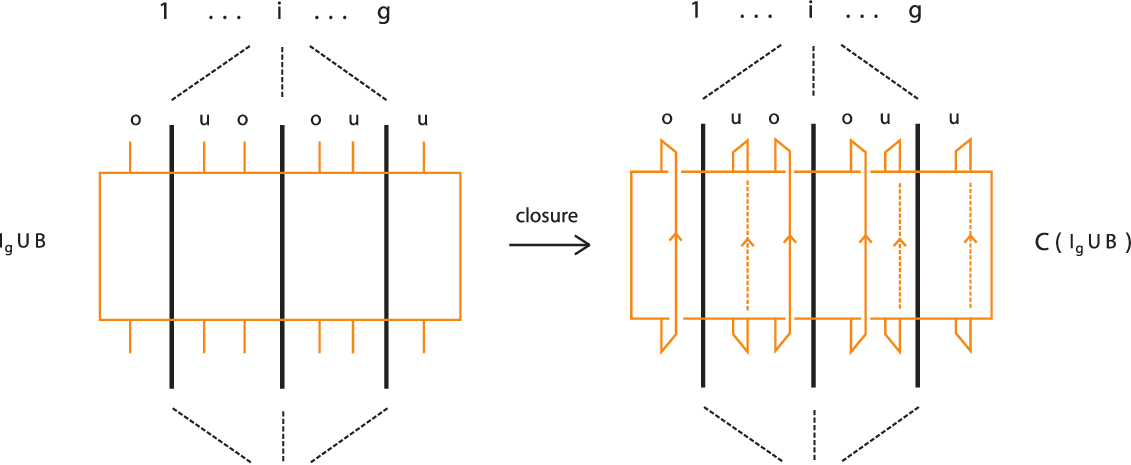}
\end{center}
\caption{A geometric mixed braid in $H_g$ and its closure.}
\label{bHg}
\end{figure}

\begin{remark}\rm
It is crucial to note that different labels on the endpoints of a geometric mixed braid will yield non isotopic links in $H_g$ in general. For more details the reader is referred to \cite{OL}. 
\end{remark}

\subsection{The analogue of the Alexander Theorem for knots in $M$}\label{alsec}

Before we proceed with the analogue of the Alexander theorem for knots in $M$, we first need to introduce the notion of $L$-moves:

\begin{defn}\label{lmdefn}\rm
An {\it $L$-move} on a geometric mixed braid $I_g \bigcup \beta$, consists in cutting an arc of the moving subbraid $\beta$ open and pulling the upper cutpoint downward and the lower  upward, so as to create a new pair of braid strands with corresponding endpoints
(on the vertical line of the cutpoint), and such that both strands  cross entirely  {\it over} or {\it under}
with the rest of the braid. Stretching the new strands over will give rise to an {\it $L_o$-move\/} and under to an {\it  $L_u$-move\/} as shown in Figure~\ref{mixedbraidsLmoves}. Note that an $L$-move does not touch the fixed subbraid $I_g$.
\end{defn}

\begin{figure}[ht]
\begin{center}
\includegraphics[width=5.75in]{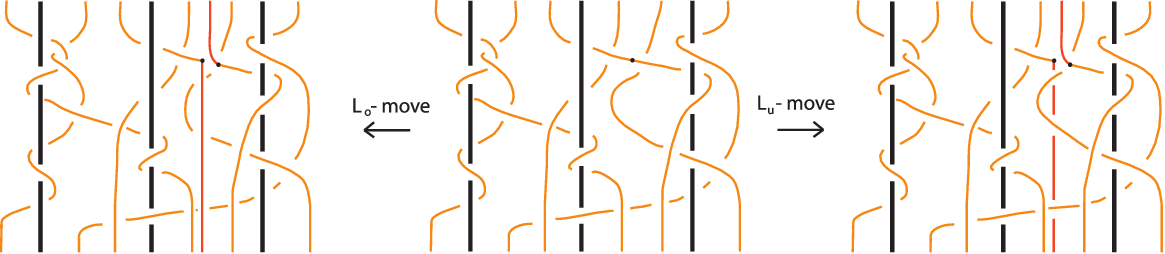}
\end{center}
\caption{A geometric mixed braid and the two types of $L$-moves. }
\label{mixedbraidsLmoves}
\end{figure}

We now recall the braiding algorithm from \cite{LR1}, the main idea of which is to keep the arcs of the oriented link diagram that go downwards with respect to the height function unaffected and replace arcs that go upwards with braid strands. These arcs are called {\it opposite arcs} and obviously these arcs do not belong to the fixed subbraid $I_g$.

\smallbreak

We first review the braiding process for mixed links in $H_g$, which can be summarized as follows:

\bigbreak

\begin{itemize}
\item We chose a base-point and we run along the diagram of the mixed link according to its orientation.
\smallbreak
\item When/If we run along an opposite arc, we subdivide it into smaller arcs, each containing crossings of one type only as shown in Figure~\ref{upa}. These arcs are called {\it up-arcs}.
\smallbreak

\begin{figure}[ht]
\begin{center}
\includegraphics[width=4.7in]{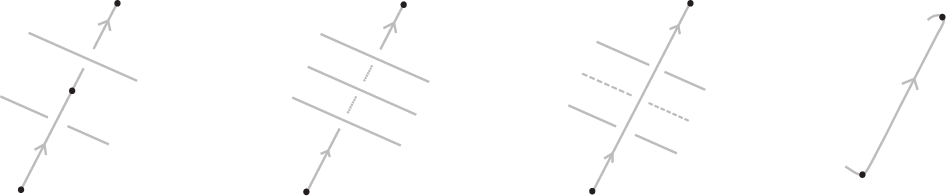}
\end{center}
\caption{Up-arcs.}
\label{upa}
\end{figure}

\item We now label every up-arc with an ``o''or a ``u'', according to the crossings it contains. If it contains no crossings, then the choice is arbitrary.
\smallbreak
\item We perform an $L_o$-move on all up-arcs which were labeled with an ``o'' and an $L_u$-move on all up-arcs which were labeled with an ``u'' (see Figure~\ref{ahg} by ignoring the red springs).
\smallbreak

\item The result is a geometric mixed braid whose closure is isotopic to the initial mixed link.
\end{itemize}

\smallbreak

\noindent Note that the braiding process does not involve the fixed part of the mixed link.

\bigbreak

For mixed links in $S^3\backslash \hat{I}_g$ the idea is the same but before applying the braiding algorithm, one has to guarantee that $I_g$ will remain fixed during the process. In particular, consider a mixed link $\hat{I}_g \bigcup L$. $\hat{I}_g$ in $S^3$ represents the closure of the identity braid $I_g$, and thus, it can be viewed as the union of $I_g$ with an arc at infinity, that resembles the closure of $I_g$. By general position arguments, the link $L$ can be isotoped to the complement of a tubular neighborhood of this arc in $S^3$, say $W$, which contains $I_g$. The braiding algorithm on $W$ will not affect $I_g$ and it will braid $L$. 

\smallbreak

Using the braiding algorithm, the following theorem is proved in \cite{LR1} (see also \cite{OL} for a detailed proof for the case of $H_g$):

\begin{thm}[{\bf The analogue of the Alexander theorem for} $M$]
An oriented mixed link $I_g \cup L$ in $M$ may be braided to a geometric mixed braid $I_g \cup B$, the closure of which is isotopic to $I_g \cup L$.
\end{thm}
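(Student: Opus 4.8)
The plan is to establish the theorem constructively, by running the braiding algorithm recalled just above and verifying that each step does what is claimed. First I would fix a base-point on the moving part $L$ of the mixed link diagram $I_g\cup\widetilde{L}$ (the fixed part $I_g$ is already a braid, so it needs no attention) and traverse $\widetilde{L}$ according to its orientation, recording which arcs are \emph{opposite arcs}, i.e.\ run upward against the height function of $I_g$. Since $I_g$ consists of straight descending strands, none of its arcs are opposite arcs, so this subdivision is confined to the moving part. I would then subdivide each opposite arc into finitely many \emph{up-arcs}, each containing crossings of a single sign (over-type or under-type), as in Figure~\ref{upa}; finiteness is clear because a link diagram has finitely many crossings.

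Next I would perform, on each up-arc, an $L_o$-move if it was labelled ``o'' and an $L_u$-move if it was labelled ``u'' (arbitrary label if the up-arc is crossing-free). By Definition~\ref{lmdefn}, an $L$-move replaces an arc of the moving subbraid by a pair of new descending strands that pass entirely over (resp.\ under) the rest of the diagram, and crucially it does not touch $I_g$. Performing all these moves simultaneously eliminates every opposite arc: after the move the former up-arc, together with the two new strands, is monotone descending, so the resulting diagram has no arc running upward and is therefore a geometric mixed braid $I_g\cup B$ in the sense of the relevant definition, with $I_g$ still the fixed subbraid. I would then argue that the closure $C(I_g\cup B)$ (with the appropriate closure convention, the handlebody closure of Figure~\ref{bHg} in the $H_g$ case, the ordinary closure in the $S^3\backslash\hat I_g$ case) recovers $I_g\cup L$ up to isotopy in $M$: each $L_o$- or $L_u$-move, once closed, is undone by a sequence of planar isotopies and Reidemeister/extended Reidemeister moves that slide the new descending strands back over (resp.\ under) the braid to re-form the original up-arc, and these moves are exactly those permitted in the definition of isotopy in $M$.

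For the $S^3\backslash\hat I_g$ case I would insert the preliminary reduction already described: view $\hat I_g$ as $I_g$ together with the arc at infinity, take a tubular neighbourhood $W$ of that arc whose complement contains $I_g$, and use a general-position argument to isotope $L$ off $W$; the braiding algorithm then runs inside the complement of $W$ and leaves $I_g$ (hence $\hat I_g$) fixed throughout. After this reduction the argument is identical to the $H_g$ case, with the ordinary closure operation replacing the handlebody closure.

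The main obstacle I expect is not the combinatorics of the $L$-moves, which is routine, but the careful bookkeeping needed to confirm that the closure of the braided diagram is isotopic to the original mixed link \emph{within the allowed move set of $M$} — in particular that pulling a new pair of strands back past the fixed part $I_g$ (or, in the $S^3\backslash\hat I_g$ case, past the arc at infinity) uses only planar $\Delta$-moves, Reidemeister moves, and the \emph{extended} Reidemeister moves of Figure~\ref{extreid}, and never requires moving $I_g$ itself. Since this is precisely the content of the braiding theorem of \cite{LR1} (with the detailed $H_g$ treatment in \cite{OL}), I would cite those sources for the verification rather than reproduce it in full.
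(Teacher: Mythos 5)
Your proposal follows exactly the paper's own route: it runs the braiding algorithm of \cite{LR1} recalled in \S~\ref{alsec} (subdivision of opposite arcs into up-arcs, labelling, $L_o$/$L_u$-moves that do not touch $I_g$, plus the preliminary general-position isotopy off the arc at infinity in the $S^3\backslash \hat{I}_g$ case), and like the paper it defers the detailed verification that the closure is recovered within the allowed move set to \cite{LR1} and \cite{OL}. This is correct and essentially identical to the argument the paper gives.
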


\subsection{Braid equivalence for knots in $M$}\label{ms3}

In \cite{LR1} it is shown that braid equivalence in $S^3$ as well as mixed braid equivalence in $M$ are generated only by the $L$-moves. So we have the following:

\begin{thm}[{\bf Geometric braid equivalence for} $M$, Theorem~3 \cite{OL} \& Theorem~4.2 \cite{LR1}] \label{geommarkov}
Two oriented links in $M$ are isotopic if and only if any two corresponding geometric mixed braids in $S^3$ differ by $L$-moves that do not touch the fixed part $I_g$ of the mixed braid.
\end{thm}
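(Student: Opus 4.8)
The plan is to reduce the claimed "geometric braid equivalence for $M$" to the corresponding statement already established for classical braids in $S^3$, and then to track carefully what the added constraint — that the fixed subbraid $I_g$ is never touched — costs us. Concretely, I would proceed in three stages. First, the forward direction: if two geometric mixed braids $I_g\cup B_1$ and $I_g\cup B_2$ in $S^3$ differ by a finite sequence of $L$-moves that avoid $I_g$, then taking closures (the closure operation appropriate to $M$, either the $H_g$-closure of Definition as in Figure~\ref{bHg} or the ordinary closure for $S^3\backslash\hat I_g$) yields isotopic mixed links, hence isotopic links in $M$. This is the easy direction: one checks that a single $L$-move (either $L_o$ or $L_u$) changes the closure only by an isotopy of the moving part in the complement of $I_g$, exactly as in the classical Alexander/Markov picture, because the new pair of strands created by the move can be shrunk back along the vertical line of the cutpoint. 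Since the move does not touch $I_g$, the whole isotopy takes place in $S^3\setminus I_g$ (resp. $S^3\setminus\hat I_g$), so it descends to an isotopy of links in $M$.

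Second, the converse, which is the substantive half. Suppose $L,L'$ are isotopic links in $M$; I want to produce the sequence of fixed-$I_g$-avoiding $L$-moves connecting two chosen geometric mixed braids for them. The strategy is: (1) realize the isotopy in $S^3$ as a finite sequence of the moves in the isotopy definition — planar $\Delta$-moves, the three Reidemeister moves, and the extended Reidemeister moves of Figure~\ref{extreid} — none of which alters $I_g$ as a subdiagram; (2) interleave this sequence with applications of the braiding algorithm of \cite{LR1}, using Alexander's theorem for $M$ (proved above) at each stage to re-braid the intermediate mixed links. Since the braiding algorithm, as emphasized in the excerpt, never touches the fixed part — and in the $S^3\setminus\hat I_g$ case one first pushes $L$ into the region $W$ so that $I_g$ stays fixed — every intermediate braided form is again a genuine geometric mixed braid relative to the same $I_g$. (3) Between two consecutive braided forms one invokes the classical result of \cite{LR1} that braid equivalence in $S^3$ is generated by $L$-moves, and one checks that the $L$-moves so produced can be chosen disjoint from $I_g$: this is where one must verify that cutpoints can always be placed on opposite arcs of the moving subbraid, which by construction never lie on $I_g$, and that stretching the new strands entirely over or under can be arranged without crossing through $I_g$ (if a strand would have to pass $I_g$ on one side, one uses the other side, or absorbs the obstruction into an extended Reidemeister move). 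Assembling the pieces gives the desired sequence.

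The main obstacle I anticipate is precisely step (3) of the converse: guaranteeing that the $L$-moves furnished by the classical theorem of \cite{LR1} respect the fixity of $I_g$, i.e. that one never needs a cutpoint on, or a strand-stretch across, the fixed part. The clean way around this is not to reprove the classical $L$-move theorem but to cite it in the strong form already available as Theorem~4.2 of \cite{LR1} and Theorem~3 of \cite{OL} — where the statement is made for mixed braids and the proof is organized so that the moving subbraid is handled exactly as an ordinary braid while $I_g$ is carried along untouched. In other words, the honest content of the proof is the observation that the braiding algorithm and the $L$-move calculus of \cite{LR1, OL} are ``local to the moving part,'' so everything that is true for classical braids in $S^3$ transfers verbatim with $I_g$ as an inert spectator; the theorem is then essentially a restatement of those references, and the write-up should make the reduction explicit and point to \cite{LR1, OL} for the mechanics, rather than repeat them.
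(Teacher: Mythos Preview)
Your proposal is reasonable, but note that the paper does not actually prove this theorem: it is stated purely as a citation of Theorem~3 in \cite{OL} and Theorem~4.2 in \cite{LR1}, preceded only by the remark that ``in \cite{LR1} it is shown that braid equivalence in $S^3$ as well as mixed braid equivalence in $M$ are generated only by the $L$-moves.'' Your final paragraph reaches exactly this conclusion --- that the honest content is a reduction to \cite{LR1, OL} and one should cite rather than reprove --- so you end up in agreement with the paper; the preceding sketch of the forward and converse directions is extra exposition beyond what the paper itself supplies.
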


In order now to construct invariants for knots in $M$ using the braid approach, we need to move toward an algebraic statement of Theorem~\ref{geommarkov}. Toward that end, we introduce the notion of {\it algebraic mixed braids}.

\begin{defn}\rm
An \textit{algebraic mixed braid} is a mixed braid on $g+n$ strands such that the first $g$ strands are fixed and form the identity braid on $g$ strands and the next $n$ strands are moving strands and represent a link in the manifold $M$.
\end{defn}

Clearly, an algebraic mixed braid is a special case of a geometric mixed braid. Moreover, geometric mixed braids can be turned to algebraic mixed braids (see \cite{OL} Lemma~1) using the technique of {\it parting}. {\it Parting} a geometric mixed braid $I_g \bigcup\beta$ on $g+n$ strands means to separate its endpoints into two different sets, the first $g$ belonging to the subbraid $I_g$ and the last $n$ to $\beta$, and so that the resulting braids have isotopic closures. This can be realized by pulling each pair of corresponding moving strands to the right and {\it over\/} or {\it under\/} each  strand of $I_g$ that lies on their right according to its label. We start from the rightmost pair respecting the position of the endpoints (for an abstract illustration see Figure~\ref{partingandcombing}). The result of parting is an {\it algebraic mixed braid}. 

\smallbreak

The set of all algebraic mixed braids on $g+n$ strands forms a subgroup of $B_{g+n}$, denoted $B_{g,n}$, called the {\it mixed braid group}. The mixed braid group $B_{g,n}$ has been introduced and studied in \cite{La1} and it is shown that it has the following presentation:

\begin{equation} \label{B}
B_{g,n} = \left< \begin{array}{ll}  \begin{array}{l}
a_1, \ldots, a_g,  \\
\sigma_1, \ldots ,\sigma_{n-1}  \\
\end{array} &
\left|
\begin{array}{l} \sigma_k \sigma_j=\sigma_j \sigma_k, \ \ |k-j|>1   \\
\sigma_k \sigma_{k+1} \sigma_k = \sigma_{k+1} \sigma_k \sigma_{k+1}, \ \  1 \leq k \leq n-1  \\
{a_i} \sigma_k = \sigma_k {a_i}, \ \ k \geq 2, \   1 \leq i \leq g    \\
 {a_i} \sigma_1 {a_i} \sigma_1 = \sigma_1 {a_i} \sigma_1 {a_i}, \ \ 1 \leq i \leq g  \\
 {a_i} (\sigma_1 {a_r} {\sigma^{-1}_1}) =  (\sigma_1 {a_r} {\sigma^{-1}_1})  {a_i}, \ \ r < i
\end{array} \right.  \end{array} \right>,
\end{equation}

\noindent where the {\it loop generators} $a_i$ and the {\it braiding generators} $\sigma_j$ are as illustrated in Figure~\ref{Loops and Crossings}.

\begin{figure}[ht]
\begin{center}
\includegraphics[width=5.2in]{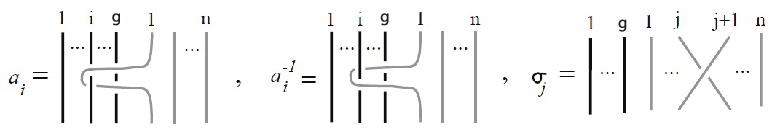}
\end{center}
\caption{ The loop generators $a_i, \ {a^{-1}_i}$ and the braiding generators $\sigma_j$ of $B_{g,n}$. }
\label{Loops and Crossings}
\end{figure}

The group $B_{g,n}$ embeds naturally into the group $B_{g,n+1}$. We shall denote $B_{g,\, \infty}:=\underset{n=1}{\overset{\infty}{\bigcup}}\, B_{g,n}$, the disjoint union of all braid groups $B_{g, n}$. We now state the algebraic version of Theorem~\ref{geommarkov}.

\begin{thm}[{\bf Algebraic mixed braid equivalence for} $M$, Theorem~5 \cite{LR2}] \label{algcco}
Two oriented links in  $M$ are isotopic if and only if any two corresponding algebraic mixed braid representatives in $B_{g,\, \infty}$ differ by a finite sequence of the following moves:

\[
\begin{array}{llccll}
1. & {\rm Markov\ move:} & {\beta}_1 {\beta}_2  & \sim & {\beta}_1{\sigma^{\pm 1}_n}{\beta}_2, & {\rm for} \ {\beta}_1, {\beta}_2  \in B_{g,n},\\
&&&&&\\
2. & {\rm Markov\ conjugation:} & \beta & \sim & {\sigma^{\pm 1}_j} \beta {\sigma^{\mp 1}_j}, & {\rm for} \ \beta, \sigma_j \in B_{g,n},\\
&&&&&\\
3. & {\rm Loop\ conjugation\ {\bf only\ for}}\ S^3\backslash \hat{I}_g{\rm :} & \beta  & \sim &  {a^{\mp 1}_i} \beta {a^{\pm 1}_i}, & {\rm for} \ \beta, a_i \in B_{g,n}.\\
\end{array}
\]
\end{thm}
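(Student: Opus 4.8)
The plan is to deduce the algebraic statement from the geometric one (Theorem~\ref{geommarkov}) by pushing everything through the \emph{parting} map that sends a geometric mixed braid to an algebraic mixed braid in $B_{g,\infty}$, in the spirit of the proofs of the corresponding results in \cite{LR2, OL}.

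First, the easy implication. I would check that each of the three moves is induced by an isotopy of links in $M$: a Markov move inserts or removes a small kink on the last moving strand, which disappears under the closure $C(I_g\cup\beta)$; a Markov conjugation slides the base point of the moving subbraid and braids one strand around the others; both are isotopies in $H_g$ and in $S^3\backslash\hat I_g$. A loop conjugation $\beta\sim a_i^{\mp1}\beta a_i^{\pm1}$ drags the entire moving part once around the $i$-th fixed strand; in $S^3\backslash\hat I_g$ the fixed part $\hat I_g$ is closed into an unlink, so this loop slides around the closing arc at infinity and is an isotopy of the complement, whereas in $H_g$ the $i$-th fixed strand runs to infinity and is not closed, so the move is genuinely nontrivial and must be---and is---excluded. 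This direction is short.

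For the converse, let $L,L'$ be isotopic links in $M$ with arbitrary algebraic mixed braid representatives. By Alexander's theorem I braid them to geometric mixed braids, and by Theorem~\ref{geommarkov} these geometric braids are connected by a finite sequence of braid isotopies and $L$-moves, none touching $I_g$. The proof then reduces to two lemmas: \textbf{(a)} parting is well defined up to the moves of the theorem---transposing two consecutive pulls in the parting procedure changes the resulting algebraic braid by one of the defining relations of $B_{g,n}$ in (\ref{B}) or by a conjugation, and an endpoint-fixing isotopy of a parted braid amounts to a Markov conjugation plus relations; and \textbf{(b)} a single $L$-move not touching $I_g$, after parting, becomes a finite sequence of the moves of the theorem. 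For (b) I would part the mixed braid before the $L$-move, perform the $L$-move on the resulting algebraic braid, and observe that the outcome is again a parting of the mixed braid obtained after the move; then I split the trajectory of the newly created pair of strands into its passage past the other moving strands---handled exactly as in Lambropoulou's classical argument that an $L$-move equals a stabilization up to conjugation, contributing only moves~1 and~2---and its passage past the fixed strands, which in $S^3\backslash\hat I_g$ is by definition a loop conjugation (move~3), while in $H_g$ one may choose the cut arc and stretch the new strands so that the whole $L$-move stays to the right of $I_g$ and the new pair never meets the fixed part, so no move~3 is needed. Iterating over the sequence of $L$-moves and combining with (a) finishes the proof; the fact that \emph{any} two representatives of a given link are related also follows, since changing the braid representative of a fixed link is itself a composition of braid isotopies and $L$-moves not touching $I_g$.

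The main obstacle will be part (b), precisely the bookkeeping of how the strands created by an $L$-move interact with the loop generators $a_i$ under parting: I will need to verify that the braid-band--type tangle that appears can be unravelled using only conjugations $a_i^{\mp1}\beta a_i^{\pm1}$ (for $S^3\backslash\hat I_g$), or avoided altogether (for $H_g$), together with the relations of $B_{g,n}$, and that the reduction of an $L$-move to a stabilization still goes through when the cut arc sits between fixed strands rather than to the right of all of them. The remaining ingredients---the $(\Leftarrow)$ direction and the well-definedness of parting---are routine diagrammatic checks.
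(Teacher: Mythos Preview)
The paper does not give its own proof of this statement: it is quoted verbatim as Theorem~5 of \cite{LR2} (with the handlebody case worked out in \cite{OL}), so there is no in-paper argument to compare against. Your outline---reduce to the geometric Markov theorem (Theorem~\ref{geommarkov}) via $L$-moves, then show that parting is well defined up to moves 1--3 and that each $L$-move parts into a stabilization plus conjugations---is exactly the strategy of \cite{LR2, OL}, so your approach is the standard one.

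One point deserves sharpening. Your explanation of why loop conjugation is absent for $H_g$ (``choose the cut arc so the new strands never meet the fixed part'') is not how the argument actually goes, and as stated it is misleading: $L$-moves can and do take place between fixed strands, and after parting the new strands will in general interact with the $a_i$'s. The correct mechanism, spelled out in the paper's Remark following the theorem (and in \S5 of \cite{OL}), is that the relevant loop conjugations that arise in $H_g$ are not forbidden but rather \emph{derivable} from moves 1 and 2 together with the relations of $B_{g,n}$; the obstruction to a general loop conjugation in $H_g$ comes from the labelled closure (over/under) of moving strands, not from any geometric avoidance of the fixed part during the $L$-move. Adjust part (b) of your sketch accordingly and the argument goes through.
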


\begin{remark}\label{lmhg}\rm
Relations (1) and (2) in Theorem~\ref{algcco} may be replaced by {\it algebraic} $L$-moves, that is, geometric $L$-moves between elements in $B_{g,\, \infty}$ that preserve the group structure of the algebraic mixed braids. Note also that some loop conjugations in $H_g$ are allowed and that in \S~5 \cite{OL} it is shown how these conjugations can be translated in terms of relations (1) and (2) of Theorem~\ref{algcco}. 

\end{remark}

\smallbreak

Following \cite{Jo}, Theorem~\ref{algcco} is the first step toward the construction of link invariants in $M$. This method has been successfully applied in the case of HOMFLYPT type invariants for knots in the Solid Torus in \cite{La2, La3, DL2} and work toward the construction of such invariants for knots in lens spaces $L(p,1)$ has been done in \cite{DL3, DL4, DLP, D3}. For the case of Kauffman bracket type invariants via braids the reader is referred to \cite{D1} for knots in the Solid Torus, and to \cite{D2} for knots in the genus 2 handlebody.

\section{Tied Links in $S^3$ \& ST}\label{sectl}

In this section we introduce the concepts of tied links and their diagrams following \cite{AJ1} for the case of $S^3$ and \cite{F} for the case of the Solid Torus ST. We also present an $L$-move braid equivalence for tied braids in $S^3$ and for tied braids in ST.

\subsection{Tied Links in $S^3$}

As explained in \cite{AJ1}, a tied link is a link whose set of components is subdivided into classes. More precisely:

\begin{defn}\label{tls3}\rm
A {\it tied (oriented) link} $L(P)$ on $n$ components is a set $L$ of $n$ disjoint smooth (oriented) closed curves embedded in $S^3$, and a set $P$ of ties, i.e., unordered pairs of points $(p_r, p_s)$ of such curves between which there is an arc called a {\it tie}. If $P=\emptyset$, then $L$ is a classical link in $S^3$.
\end{defn}

Note that ties are depicted as red \color{red} springs \color{black}or \color{red}line segments \color{black}connecting pairs of points lying on the curves and that ties are not embedded arcs, i.e. arcs can cross through ties. It is also worth mentioning that the set of ties on a tied link defines a partition of the set of components of the links. This leads to the following definition:

\begin{defn}\rm
A tie is called {\it essential} if we obtain a different partition in the set of components by removing it.
\end{defn}

Moreover, a {\it tied link diagram} is defined as a diagram of a link provided with ties which are depicted as springs connecting pairs of points lying on the curves.

\smallbreak

We introduce now the notion of isotopy between tied links, called {\it tie isotopy}. It is natural to define tie isotopy between tied links as ambient isotopy between links (ignoring the ties), and also impose that the set of ties in those links define the same partition of the set of components of the links. More precisely:

\begin{defn}\label{istls3}\rm
Two oriented tied links $L(P)$ and $L^{\prime}(P^{\prime})$ are {\it tie isotopic} if:
\smallbreak
\begin{itemize}
\item the links $L$ and $L^{\prime}$ in $S^3$ are ambient isotopic and
\smallbreak
\item the sets $P$ and $P^{\prime}$ define the same partition of the set of components of $L$ and $L^{\prime}$.
\end{itemize}
\end{defn}

Our aim now is to study tied links via {\it tied braids}, that is, usual braids in $S^3$ equipped with ties. We first need to relate tied links to tied braids (by the analogue of the Alexander theorem) and then understand how tie isotopy can be translated on the level of tied braids (by the analogue of the Markov theorem). Toward that end, we introduce the {\it monoid of tied braids} which plays the role of the braid group for tied links.

\begin{defn}\label{montls3}\rm
The tied braid monoid $TM_{n}$ is the monoid generated by $\sigma_1, \ldots, \sigma_{n-1}$, the usual braid generators of $B_{n}$, and the generators $\eta_1, \ldots, \eta_{n-1}$, called ties (see Figure~\ref{gs3}), satisfying the braid relations of $B_{n}$ together with the following relations:

\[
\begin{array}{lcl}
\eta_i \eta_j & = & \eta_j \eta_i, \qquad {\rm for\ all}\ i, j\\
&&\\
\eta_i \sigma_i & = & \sigma_i \eta_i, \qquad {\rm for\ all}\ i\\
&&\\
\eta_i \sigma_j & = & \sigma_j \eta_i, \qquad {\rm for\ all}\ |i-j|>1\\
&&\\
\eta_i \sigma_j \sigma_i^{\epsilon} & = & \sigma_j \sigma_i^{\epsilon} \eta_j, \quad {\rm for}\ |i-j|=1\ {\rm and\ where}\ \epsilon\, =\, \pm 1\\
&&\\
\eta_i \eta_j \sigma_i & = & \eta_j \sigma_i \eta_j\ =\ \sigma_i \eta_i \eta_j, \quad {\rm for}\ |i-j|=1\\
&&\\
\eta_i^2 & = & \eta_i, \qquad \ \ {\rm for\ all}\ i\\
\end{array}
\]
\end{defn}

\begin{figure}[ht]
\begin{center}
\includegraphics[width=3.2in]{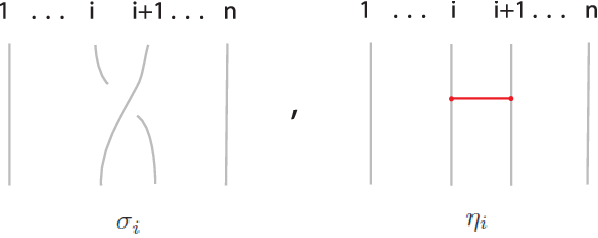}
\end{center}
\caption{Generators of $TM_n$.}
\label{gs3}
\end{figure}

\begin{defn}\rm
The {\it generalized tie} $\eta_{i, j}$, joining the $i^{th}$ with the $j^{th}$ strand is defined as:

\begin{equation}\label{genties}
\eta_{i, j}\ :=\ \sigma_i\, \ldots\, \sigma_{j-2}\, \eta_{j-1}\, \sigma_{j-2}^{-1}\, \ldots,\, \sigma_i^{-1}
\end{equation}

Set $\eta_{i, i}:=1$ and define also the {\it length} of a generalized tie $\eta_{i, j}$ as $l(\eta_{i, j})=|i-j|$ and define $l(\eta_i)=1$.
\end{defn}

\smallbreak

Generalized ties $\eta_{i, j}$ are {\it transparent} with respect to all strands between the $i^{th}$ and $j^{th}$ strands, that is, they can be drawn no matter if in front or behind these strands. Moreover, the generalized ties are provided with {\it elasticity}, i.e. each generalized tie can be transformed by a second Reidemeister move in which the tie is stretched and represented as a spring. The generalized ties satisfy the following relations (\cite{AJ1}):

\begin{equation}\label{eqf}
\begin{array}{crclcrcl}
(i) & \sigma_i\, \eta_{i, j} & = & \eta_{i+1}\, \sigma_i, & (ii) & \sigma_j\, \eta_{i, j} & = & \eta_{i, j+1}\, \sigma_i,\\
&&&&&&&\\
(iii) & \sigma_{i-1}\, \eta_{i, j} & = & \eta_{i-1, j}\, \sigma_{i-1}, & (iv) & \sigma_{j-1}\, \eta_{i, j} & = & \eta_{i, j-1}\, \sigma_{j-1}, \\
&&&&&&&\\
(v) & \eta_{i, k}\, \eta_{k, m} & = & \eta_{i, k}\, \eta_{i, m} & = & \eta_{k, m}\, \eta_{i, m},& & 1\leq i,\ k, m\leq n\\
\end{array}
\end{equation}

Using these properties and Equations~\ref{eqf}, in \cite{AJ1} it is shown that:

\begin{prop}\rm \label{propaj}
\begin{itemize}
\item[i.] Mobility property: In any tied braid all ties can be moved to the bottom (or to the top), that is, $\alpha\, \sim\, \beta\, \gamma$, where $\alpha\in TM_n, \beta \in B_n$ and $\gamma$ the set of generalized ties.
\smallbreak
\item[ii.] Any set of generalized ties in $TM_n$ defines an equivalence relation on the set of $n$ strands.
\smallbreak
\item[iii.] Let $\alpha_1=\beta_1\, \gamma_1,\ \alpha_2=\beta_2\, \gamma_2$ be two tied braids in $TM_n$. Then, $\alpha_1\, \sim\, \alpha_2$, if and only if $\beta_1\, \sim\, \beta_2$ in $B_n$ and $\gamma_1, \gamma_2$ define the same partition of the set of the strands. 
\end{itemize}
\end{prop}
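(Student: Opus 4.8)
The plan is to prove the three statements in order, since (iii) rests on both (i) and (ii).

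\textbf{Part (i).} I would argue by induction on the number of \emph{inversions} of a word $w$ representing $\alpha$, where an inversion is an occurrence of a braiding generator $\sigma_k^{\pm 1}$ to the right of an $\eta$-letter. If there are no inversions, $w$ already has the form $\beta\gamma$ with $\beta$ a word in the $\sigma_k^{\pm 1}$ and $\gamma$ a product of $\eta_j$'s, and writing each $\eta_j$ as the generalized tie $\eta_{j,j+1}$ finishes this case. Otherwise, pick an $\eta$-letter that is immediately followed by a braiding generator and move that generator to the left of the tie. The key point, which is exactly the geometric transparency and elasticity of $\eta_{i,j}$ made algebraic, is that Equations~\ref{eqf}(i)--(iv) together with the relations $\eta_i\sigma_i=\sigma_i\eta_i$ and $\eta_i\sigma_j=\sigma_j\eta_i$ ($|i-j|>1$) provide, for every position $k$, a rewriting $\eta_{i,j}\,\sigma_k^{\pm1}=\sigma_{k'}^{\pm1}\,\eta_{i',j'}$ with $\eta_{i',j'}$ again a single generalized tie of the same length. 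Each such move removes one inversion and changes no other pair's relative order, so the procedure terminates at a word of the form $\beta\gamma$; running it from the other end instead moves all ties to the top.

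\textbf{Part (ii).} To a product of generalized ties $\gamma=\eta_{i_1,j_1}\cdots\eta_{i_m,j_m}$ I would associate the graph on $\{1,\dots,n\}$ with edges $\{i_s,j_s\}$ and take as equivalence relation ``lying in the same connected component''; this is reflexive (via $\eta_{i,i}=1$), symmetric (ties are unordered) and transitive by definition. What has to be checked is that the relation depends only on $\gamma$ as an element of $TB_n$: the relations one may apply among words in the $\eta$'s alone are commutativity, idempotency $\eta_i^2=\eta_i$ and Equation~\ref{eqf}(v), and each leaves the connected components unchanged (for (v), the three edge sets $\{i,k\},\{k,m\}$; $\{i,k\},\{i,m\}$; $\{k,m\},\{i,m\}$ all span the same triangle). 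Conversely, using (v), commutativity and idempotency one reduces the edge set of each component to a fixed spanning path, so two tie-products coincide in $TB_n$ if and only if they define the same partition; this last bijectivity is what part (iii) needs.

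\textbf{Part (iii).} The ``if'' direction follows at once: if $\beta_1\sim\beta_2$ in $B_n$ and $\gamma_1,\gamma_2$ induce the same partition, then $\beta_1=\beta_2$ and, by the converse half of (ii), $\gamma_1=\gamma_2$ already in $TB_n$, so $\alpha_1=\beta_1\gamma_1=\beta_2\gamma_2=\alpha_2$. For the ``only if'' direction I would introduce the monoid homomorphism $f\colon TB_n\to B_n$, $\sigma_k\mapsto\sigma_k$, $\eta_j\mapsto 1$; every defining relation of $TB_n$ maps to a true relation of $B_n$ (the braid relations are preserved and every relation containing an $\eta$ collapses to a braid relation or to $1=1$), so $f$ is well defined and restricts to the identity on the braid subgroup $B_n\hookrightarrow TB_n$. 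Applying $f$ to $\alpha_1=\alpha_2$ gives $\beta_1=\beta_2$ in $B_n$; since $\beta_1$ is then invertible in $TB_n$, left-cancelling it in $\beta_1\gamma_1=\beta_2\gamma_2=\beta_1\gamma_2$ yields $\gamma_1=\gamma_2$ in $TB_n$, and then part (ii) forces $\gamma_1,\gamma_2$ to define the same partition. The step I expect to be the real obstacle is the bookkeeping in part (i): verifying that the commutation $\eta_{i,j}\sigma_k^{\pm1}=\sigma_{k'}^{\pm1}\eta_{i',j'}$ is genuinely available for \emph{all} positions $k$ — in particular the boundary cases $k\in\{i-1,i,j-1,j\}$ covered by Equations~\ref{eqf}(i)--(iv), and handling the signs $\epsilon=\pm1$ consistently — and that each move strictly decreases the inversion count; once $f$ and the normal form for tie-products are in place, the rest is formal.
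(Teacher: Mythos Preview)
The paper does not contain its own proof of this proposition: it is quoted verbatim from \cite{AJ1} with the preamble ``Using these properties and Equations~\ref{eqf}, in \cite{AJ1} it is shown that\ldots''. So there is nothing in the present paper to compare against directly.

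That said, your plan is sound and is essentially the argument one finds in \cite{AJ1}. A few comments on details. In part~(i), your inversion count works, but note that the rewriting is always of the form $\eta_{i,j}\sigma_k^{\pm1}=\sigma_k^{\pm1}\eta_{i',j'}$ with the \emph{same} $k$; only the tie moves. You should also make explicit the interior case $i<k<j-1$, which is not among the boundary cases of Equations~\ref{eqf}(i)--(iv): here one uses the transparency of $\eta_{i,j}$ with respect to the intermediate strands (stated in the paper just before Equations~\ref{eqf}), or derives $\sigma_k\eta_{i,j}=\eta_{i,j}\sigma_k$ directly from the defining expression $\eta_{i,j}=\sigma_i\cdots\sigma_{j-2}\eta_{j-1}\sigma_{j-2}^{-1}\cdots\sigma_i^{-1}$ and the far-commutation relations. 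In part~(ii) you are right to prove the stronger bijection (tie-products modulo the monoid relations $\leftrightarrow$ set partitions), since part~(iii) needs exactly that converse; the spanning-path normal form via Equation~\ref{eqf}(v), commutativity and idempotency is the standard way to do it. In part~(iii), the forgetful homomorphism $f\colon TB_n\to B_n$ is well defined for the reason you give, and since any $\beta\in B_n$ is invertible inside $TB_n$, the left-cancellation step is legitimate; your argument also shows, incidentally, that the decomposition $\alpha=\beta\gamma$ produced in (i) is unique.
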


By considering now $TM_n \subset TM_{n+1}$, we can consider the inductive limit $TM_{\infty}$. Using the generalized ties and the fact that ties can be transferred freely, in \cite{AJ1} it is shown that Lambropoulou's braiding algorithm can also be applied for tied links. In particular we have the following results:

\begin{thm}[{\bf The analogue of the Alexander theorem for tied links in} $S^3$] \label{alextls3}
Every oriented tied link can be obtained by closing a tied braid.
\end{thm}

\begin{thm}[{\bf The analogue of the Markov theorem for tied links}] \label{marktls3}
Two tied braids have tie isotopic closures if and only if one can  obtained from the other by a finite sequence of the following moves:

\[
\begin{array}{lllcll}
1^{\prime}. & {\rm Markov\ Conjugation:} &  \alpha\, \beta & \sim & \beta\, \alpha, & {\rm for\ all}\ \alpha,\, \beta \in TM_n,\\
&&&&&\\
2^{\prime}. & {\rm Markov\ Stabilization:} &  \alpha & \sim & \alpha\, \sigma_n^{\pm 1}, & {\rm for\ all}\ \alpha \in TM_n,\\
&&&&&\\
3^{\prime}. & {\rm Ties:} &  \alpha & \sim & \alpha\, \eta_{i, j}, & {\rm for\ all}\ \alpha \in TB_n\ {\rm such\ that}\\
\end{array}
\]

\noindent $s_{\alpha}(i)=j$, {\rm where} $s_{\alpha}$ {\rm denotes the permutation associated to the braid obtained from} $\alpha$ {\rm by forgetting its ties.}
\end{thm}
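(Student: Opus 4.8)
The plan is to mimic the strategy that already works for the classical Markov theorem in $S^3$ (the version without ties) and upgrade it using the Mobility property of Proposition~\ref{propaj}. First I would verify the ``only if'' direction, namely that each of the three moves $1',2',3'$ preserves tie isotopy of the closures. Moves $1'$ and $2'$ are the classical Markov conjugation and stabilization applied to a tied braid; geometrically they are local isotopies of the closed braid that do not create or destroy components, hence they preserve both the underlying link type and the partition of components induced by the ties, so by Definition~\ref{istls3} they preserve tie isotopy. Move $3'$ is the subtle one: adding a generalized tie $\eta_{i,j}$ when $s_\alpha(i)=j$ connects two points that already lie on the \emph{same} component of the closure $\widehat{\alpha}$, so by Proposition~\ref{propaj}(ii) the equivalence relation on strands — and therefore the partition on components of the closure — is unchanged; and since $\eta_{i,j}$ is not an embedded arc, attaching it changes nothing about the link $L$ itself. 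Hence all three moves are legitimate.

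For the ``if'' direction I would argue as follows. Let $\alpha_1,\alpha_2$ be tied braids with tie isotopic closures. By the Mobility property (Proposition~\ref{propaj}(i)) write $\alpha_k\sim\beta_k\gamma_k$ with $\beta_k\in B_n$ and $\gamma_k$ a product of generalized ties; note this rewriting uses only relations in $TB_n$ and hence, a fortiori, only moves available to us (in fact pure monoid relations, which are implied by $1'$). Tie isotopy of the closures means, by Definition~\ref{istls3}, that (a) $\widehat{\beta_1}$ and $\widehat{\beta_2}$ are ambient isotopic classical links in $S^3$, and (b) $\gamma_1$ and $\gamma_2$ induce the same partition on components. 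For (a), the classical Markov theorem says $\beta_1$ and $\beta_2$ are related by a sequence of classical conjugations and stabilizations — these are exactly moves $1'$ and $2'$, carried out on the tie-free parts; one must check they can be performed in the presence of the (mobile) ties, which is immediate since by Mobility we may always slide all ties to the bottom, act on the top braid part, and slide them back. This reduces the problem to the case $\beta_1=\beta_2=:\beta$, i.e.\ to showing that two tied braids $\beta\gamma_1$ and $\beta\gamma_2$ with the same underlying braid and with $\gamma_1,\gamma_2$ inducing the same partition are related by the moves.

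The heart of the proof is therefore this last reduction, and I expect it to be the main obstacle. The point is the following combinatorial lemma: if $\gamma_1,\gamma_2$ are two products of generalized ties inducing the same partition of $\{1,\dots,n\}$, then $\beta\gamma_1$ and $\beta\gamma_2$ are related using relations~\eqref{eqf}, the idempotent and commutation relations of Definition~\ref{montls3}, and move $3'$. Relation \eqref{eqf}(v) shows that within one block of the partition any spanning set of ties can be converted to any other (it is the standard ``generators of the equivalence relation'' manipulation: a connected graph on a block can be reduced to any spanning tree). So both $\gamma_1$ and $\gamma_2$ can be brought to a normal form consisting, for each block, of ties $\eta_{i_1,i_2},\eta_{i_2,i_3},\dots$ along the block. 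The only discrepancy that can remain is a tie $\eta_{i,j}$ present in one normal form but absent in the other — but precisely because $\gamma_1$ and $\gamma_2$ induce the same partition, such a tie joins two strands already in the same block, hence two points on the same component of the closure $\widehat{\beta}$ \emph{provided} one first accounts for how $\beta$ permutes strands; conjugating by $\beta$ (move $1'$) moves $\eta_{i,j}$ to some $\eta_{i',j'}$ with $s_{\beta\gamma}(i')=j'$, at which point move $3'$ deletes or inserts it. Carrying out this normal-form argument carefully — in particular tracking the interaction between the braid $\beta$, which acts by its permutation $s_\beta$, and the tie indices under relations \eqref{eqf}(i)--(iv) — is the technical crux; once it is in place the theorem follows by concatenating the three reductions. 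I would also remark that this is the exact tied-braid analogue of the argument in \cite{AJ1}, so one may alternatively cite that proof and indicate the (minor) changes, but giving the self-contained version above is cleaner.
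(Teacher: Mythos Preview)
Your proposal is correct and follows essentially the same strategy that the paper employs. Note, however, that the paper does not itself supply a proof of Theorem~\ref{marktls3}: it is quoted from \cite{AJ1} as background. The paper's own argument appears only for the generalized version, Theorem~\ref{markovhg}, and there it proceeds exactly as you outline---use Mobility (Proposition~\ref{propaj}) to write $\alpha_k=\beta_k\gamma_k$, invoke the classical Markov theorem to align the $\beta_k$'s via moves $1',2'$, then reconcile the tie words $\gamma_1,\gamma_2$ using move $3'$. The only cosmetic difference is that where you describe a ``normal form'' argument on the ties, the paper (following \cite{AJ1}, Theorem~3.7) phrases it as multiplying both tie words by $\lambda=\prod_{s_{\beta}(i)=j}\eta_{i,j}$ to force $\gamma_1\lambda=\gamma_2\lambda$, then stripping $\lambda$ via move $3'$; this is the same mechanism.
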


\begin{remark}\rm
\begin{itemize}
\item[] It is worth mentioning that in \cite{AJ1, AJ3, AJ4} the authors define Jones type invariants for tied links in $S^3$, by showing first that the bt-algebra (the algebraic counterpart of the braid group for tied links) supports a Markov trace.
\smallbreak
\item[] Moreover, the theory of {\it tied pseudo links}, i.e. tied links with some missing crossing information, is introduced and studied in \cite{D4}. In particular, the author formulates and proves the analogues of the Alexander and Markov theorems for tied pseudo links in $S^3$. For the case of pseudo links in $H_g$ without ties, the reader is referred to \cite{D7}.
\end{itemize}
\end{remark}

\bigbreak
\bigbreak

\noindent {\bf $L$-moves and tied links in $S^3$.} We now introduce the notion of $L$-moves for tied links in $S^3$ and our intention is to present an $L$-move braid equivalence theorem for tied braids (recall Theorem~\ref{algcco} and Remark~\ref{lmhg}). An $L$-move on a tied braid in $S^3$ is defined as in Definition~\ref{lmdefn} for geometric mixed braids, by ignoring the fixed parts of the geometric mixed braids and by observing that due to the transparency property of the ties, the new pair of braid strands that appear after the performance of an $L$-move may cross above or below all ties of the tied braid (for an illustration of the performance of an $L_o$-move on a tied braid see Figure~\ref{lmtb}). 

\begin{figure}[ht]
\begin{center}
\includegraphics[width=4.8in]{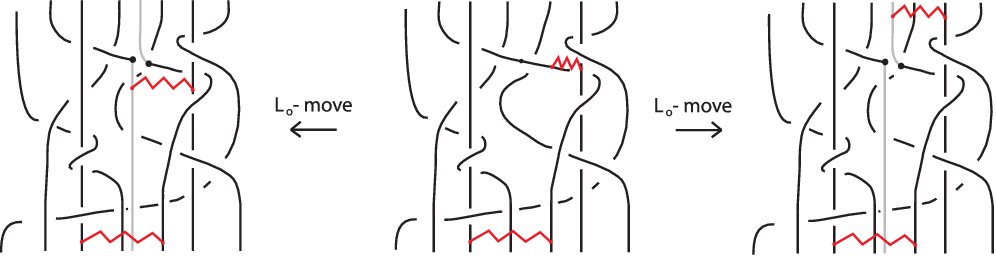}
\end{center}
\caption{An $L_o$-move on a tied braid.}
\label{lmtb}
\end{figure}

Moreover, the new braid strands that appear after the performance of an $L$-move will not affect the partition induced on the tied link by the set of ties (see Figure~\ref{lmtb}). Finally, as shown in \cite{LR1}, the $L$-moves can realize conjugation, while stabilization moves are special cases of $L$-moves. Thus, relations $1^{\prime}$ and $2^{\prime}$ of Theorem~\ref{marktls3} may be replaced by $L$-moves. This leads to the following theorem:

\begin{thm}[{\bf $L$-move equivalence for tied braids}]\label{lmtbs3}
Two tied braids have isotopic closures if and only if one can obtained from the other by a finite sequence of the following
moves:

\[
\begin{array}{llcll}
{\rm L-moves} &   &  &  & \\
&&&&\\
{\rm Ties:} &  \alpha & \sim & \alpha\, \eta_{i, j}, & {\rm for\ all}\ \alpha \in TM_n\ {\rm such\ that}\\
\end{array}
\]
\end{thm}

\subsection{Tied Links in ST}

We now generalize the concept of tied links in $S^3$ to tied links in the Solid Torus ST following \cite{F}. We consider ST to be the complement of a solid torus in $S^3$. As explained in \S~\ref{mlisH}, an oriented link $L$ in ST can be represented by an oriented mixed link in $S^{3}$, that is a link in $S^{3}$ consisting of the unknotted fixed part $\widehat{I}$ representing the complementary solid torus in $S^3$ and the moving part $L$ that links with $\widehat{I}$. Similarly to Definition~\ref{tls3}, a tied (oriented) link in ST is a mixed link equipped with ties that may also involve the fixed part of the link. We will call such links {\it tied mixed links}. For an illustration see Figure~\ref{tmlink}.

\begin{figure}[ht]
\begin{center}
\includegraphics[width=2.4in]{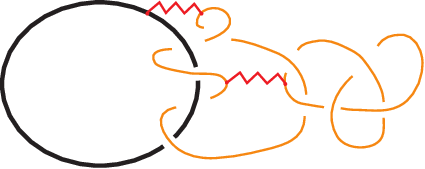}
\end{center}
\caption{A tied mixed link in $S^3$.}
\label{tmlink}
\end{figure}

As in the case of tied links in $S^3$, the set of ties induces a partition on the set of the components of the link $L$, where two components of $L$ belong to the same class if they are connected by a tie. Moreover, isotopy for tied links in ST is translated to mixed link isotopy, i.e. the classical Reidemeister moves for the moving part of the tied mixed link, the extended Reidemeister moves that involve both the fixed and the moving part of the link, and also $P$ and $P^{\prime}$ define the same set of partition on the components of the mixed link, as in the case of tied links in $S^3$ (see Definition~\ref{istls3}).

\smallbreak

We now present the {\it monoid of tied braids of type B} which plays the role of the braid group for tied mixed links in ST.

\begin{defn}\label{montlst}\rm
The {\it tied braid monoid of type B}, $TM_{1, n}$ is the monoid generated by $\sigma_1, \ldots, \sigma_{n-1}$ and $a_1$, the usual braid generators of the mixed braid group $B_{1, n}$, the generators $\eta_1, \ldots, \eta_{n-1}$, called ties and illustrated in Figure~\ref{gs3} and $\phi_1$, called {\it fixed tie} (see Figure~\ref{ftie}), satisfying the braid relations of $B_{1, n}$ together with relations in Definition~\ref{montls3}, and also:

\[
\begin{array}{rcll}
\phi_1^2 & = & \phi_1,& \\
&&&\\
a_1 \eta_i & = & \eta_i a_1, & {\rm for\ all}\ i\\
&&&\\
a_1 \phi_1 & = & \phi_1 a_1,& \\
&&&\\
\phi_1 \eta_1 & = & \eta_i \phi_1,& {\rm for\ all}\ i\\
&&&\\
\phi_1 \sigma_1 & = & \sigma_i \phi_1, & {\rm for\ all}\ 2\leq i \leq n-1\\
&&&\\
\sigma_{i-1} \ldots \sigma_1 \phi_1 \sigma_1^{-1} \ldots \sigma_{i-1}^{-1} & = & \sigma_{i-1}^{-1} \ldots \sigma_1^{-1} \phi_1 \sigma_1 \ldots \sigma_{i-1},& {\rm for\ all}\ 2\leq i \leq n\\
&&&\\
\phi_1 \eta_1 & = & \phi_1 \sigma_1 \phi_1 \sigma_1^{-1}\ =\ \sigma_1 \phi_1 \sigma_1^{-1}\phi_1&\\
\end{array}
\]
\end{defn}

\begin{figure}[ht]
\begin{center}
\includegraphics[width=1.2in]{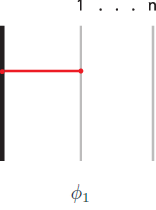}
\end{center}
\caption{The fixed tie $\phi_1$.}
\label{ftie}
\end{figure}

\begin{remark}\rm
Comparing Definitions \ref{montls3} and \ref{montlst}, we have that $TM_n \subset TM_n^B$ and moreover, $TM_n^B \subset TM_{n+1}^B$, which allows us to define the inductive limit $TM_{\infty}^B$.
\end{remark}

\begin{defn}\rm
The {\it generalized fixed tie} $f_j$, joining the fixed strand with the $j^{th}$ strand, is defined as follows:

\[
f_j\ :=\ \sigma_j\, \ldots\, \sigma_1\, \phi_{1}\, \sigma_{1}^{-1}\, \ldots,\, \sigma_j^{-1}
\]
\end{defn}

The generalized ties and the generalized fixed ties satisfy the following relations (Eq.~20--22 \cite{F}):

\begin{equation}\label{eqf1}
\begin{array}{llcll}
(i) & f_j\, \eta_{i} & = & \eta_{i}\, f_j, & {\rm for\ all}\ 1\leq i \leq n-1\ \&\ 1\leq j\leq n,\\
&&&&\\
(ii) & f_{j}\, \sigma_{i} & = & \sigma_{i}\, f_{s_i(j)}, & {\rm where}\ s_i\ {\rm is\ the\ transposition}\ (i\ i+1), \\
&&&&\\
(iii) & \eta_{i, j}\, f_i & = & f_j\, f_i\ =\ f_j\, \eta_{i, j} & {\rm for\ all}\ 1\leq i\neq j\leq n.\\
\end{array}
\end{equation}

Using Eq.~\ref{eqf} and \ref{eqf1} in \cite{F}, Proposition~\ref{propaj} is generalized to tied braids in ST. In particular:

\begin{prop}\rm\label{propmf}
\begin{itemize}
\item[i.] Mobility property: In any tied braid in ST, all ties can be moved to the bottom (or to the top), that is, $\alpha\, \sim\, \beta\, \gamma$, where $\alpha\in TM_{1, n}, \beta \in B_{1, n}$ and $\gamma$ the set of generalized and generalized fixed ties.
\smallbreak
\item[ii.] Any set of generalized ties in $TM_{1, n}$ defines an equivalence relation on the set of $1+n$ strands.
\end{itemize}
\end{prop}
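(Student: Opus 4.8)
The plan is to adapt to the type B monoid $TB_{1,n}$ the proof of Proposition~\ref{propaj} from \cite{AJ1}; the only genuinely new features are the fixed strand, the loop generator $a_1$ and the fixed tie $\phi_1$, and the relations needed to handle them are precisely the ones recorded in Definition~\ref{montlst} and in Equations~\ref{eqf1}. Throughout I would label the fixed strand by $0$, so that a configuration of ties on the $1+n$ strands is a collection of unordered pairs from $\{0,1,\dots,n\}$: a generalized tie $\eta_{i,j}$ corresponds to the pair $\{i,j\}$ of moving strands, and a generalized fixed tie $f_j$ to a pair having $0$ as one endpoint.

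For part (i) I would argue by induction on a complexity measure. Fix a word $w$ representing $\alpha$ in the elementary generators $\sigma_1^{\pm1},\dots,\sigma_{n-1}^{\pm1},a_1^{\pm1},\eta_1,\dots,\eta_{n-1},\phi_1$, and for bookkeeping replace each $\eta_i$ by $\eta_{i,i+1}$ and $\phi_1$ by $f_0$, so that $w$ is a word over the alphabet consisting of the braid generators $\sigma_k^{\pm1}, a_1^{\pm1}$ and the generalized ties $\eta_{i,j}, f_j$. Let $N(w)$ be the number of pairs (tie letter, braid letter) occurring in $w$ in that order, i.e. with the tie to the left. If $N(w)=0$ then $w$ already has the required form $\beta\gamma$ with $\beta\in B_{1,n}$ and $\gamma$ a product of generalized and generalized fixed ties. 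If $N(w)>0$ then some tie letter $\tau$ is immediately followed by a braid generator $g$, and the engine of the argument is the family of sliding identities $g\tau=\tau'g$, where for $g=\sigma_k^{\pm1}$ the tie $\tau'$ is obtained from $\tau$ by applying the transposition $(k\ k+1)$ to its endpoints, and for $g=a_1^{\pm1}$ one simply has $\tau'=\tau$. Replacing the subword $\tau g$ by $g\tau'$ leaves the length of $w$ unchanged and decreases $N(w)$ by exactly one, so the rewriting terminates in a word of the desired form; the dual statement, with all ties pushed to the top, follows by the mirror argument.

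The hard part will be establishing the sliding identities for the loop generator $a_1^{\pm1}$. For $\sigma_k^{\pm1}$ they are immediate: commuting $\sigma_k^{\pm1}$ past a generalized tie is a direct consequence of Equations~\ref{eqf} together with the transparency of $\eta_{i,j}$ with respect to the strands it straddles, and commuting it past a generalized fixed tie is relation $(ii)$ of Equations~\ref{eqf1}. For $a_1^{\pm1}$, however, commutation is stated outright only for the elementary ties $\eta_i$ and for $\phi_1$ in Definition~\ref{montlst}. When $\tau$ is a generalized tie $\eta_{1,j}$, or a generalized fixed tie whose defining conjugation word contains $\sigma_1$, I would unfold that conjugation word (using \eqref{genties}, respectively the definition of $f_j$) and carry $a_1^{\pm1}$ across the $\sigma_k^{\pm1}$'s one at a time, exploiting $a_1\sigma_k=\sigma_k a_1$ for $k\ge2$, the mixed braid relation $a_1\sigma_1a_1\sigma_1=\sigma_1a_1\sigma_1a_1$ from \eqref{B}, and, in the fixed-tie case, the $\phi_1$-conjugation relations of Definition~\ref{montlst}. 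This short but explicit computation is where the type B structure really enters, and I expect it to be the main obstacle.

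For part (ii) I would read a product $\gamma$ of generalized and generalized fixed ties as a graph on the vertex set $\{0,1,\dots,n\}$, with one edge for each factor, and declare two strands related precisely when they lie in the same connected component of this graph. Reflexivity is built in through the convention $\eta_{r,r}=1$; symmetry holds because ties are unordered pairs; and transitivity is exactly part $(v)$ of Equations~\ref{eqf} and part $(iii)$ of Equations~\ref{eqf1}, which allow a chain of ties $r-t-s$ to be rewritten so as to exhibit a tie joining $r$ and $s$ directly (or an $f$-tie to $0$). Finally, the commuting relations $\eta_i\eta_j=\eta_j\eta_i$, $\eta_if_j=f_j\eta_i$, $a_1\phi_1=\phi_1a_1$ together with the idempotencies $\eta_i^2=\eta_i$ and $\phi_1^2=\phi_1$ show that none of the defining relations of $TB_{1,n}$ changes the connected components of the graph, so the induced partition depends only on the class of $\gamma$ in $TB_{1,n}$, which is the assertion.
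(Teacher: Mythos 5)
The paper offers no proof of this proposition --- it is imported from \cite{F}, with the one-line justification that Proposition~\ref{propaj} generalizes to ST via Eq.~\ref{eqf} and Eq.~\ref{eqf1} --- and your proposal follows precisely that route (slide ties past braid letters using the relations of Definition~\ref{montlst} and Eq.~\ref{eqf}, \ref{eqf1}, then read the resulting tie word as a graph on the $1+n$ strands), so it is essentially the same approach. The only caveat concerns the step you yourself flag as the crux: the four-term relation $a_1\sigma_1 a_1\sigma_1=\sigma_1 a_1\sigma_1 a_1$ cannot move a single $a_1$ across $\sigma_1$, so the commutation $a_1\, f_j = f_j\, a_1$ for $j\ge 2$ has to come instead from the transparency relation $\sigma_{i-1}\cdots\sigma_1\,\phi_1\,\sigma_1^{-1}\cdots\sigma_{i-1}^{-1}=\sigma_{i-1}^{-1}\cdots\sigma_1^{-1}\,\phi_1\,\sigma_1\cdots\sigma_{i-1}$ of Definition~\ref{montlst} (geometrically, the loop passes through the non-embedded tie), and this should be made explicit rather than attributed to the mixed braid relation.
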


We are now in position to state the analogues of the Alexander and the Markov Theorems for tied links in ST.

\begin{thm}[{\bf The analogue of the Alexander theorem for tied links in ST}] \label{alextlst}
Every oriented tied link can be obtained by closing a tied braid.
\end{thm}

An alternative proof of Theorem~\ref{alextlst} is based on the fact that the steps of the braiding algorithm mentioned in \S~\ref{alsec}, involve moves that resemble the $L$-moves and the fact that by Definition~\ref{lmdefn}, the performance of $L$-moves on a tied mixed braid will leave the partition on the set of the components of the link unchanged.

We now proceed by stating the analogue of the Markov theorem for tied links in ST.

\begin{thm}[{\bf The analogue of the Markov Theorem for tied links in ST}] \label{marktlst}
Two tied braids in ST have tie isotopic closures if and only if one can  obtained from the other by a finite sequence of the following moves:

\[
\begin{array}{lllcll}
1^{\prime \prime}. & {\rm Markov\ Conjugation:} &  \alpha\, \beta & \sim & \beta\, \alpha, & {\rm for\ all}\ \alpha,\, \beta \in TM_n,\\
&&&&&\\
2^{\prime \prime}. & {\rm Markov\ Stabilization:} &  \alpha & \sim & \alpha\, \sigma_n^{\pm 1}, & {\rm for\ all}\ \alpha \in TM_n,\\
&&&&&\\
3^{\prime \prime}. & {\rm Ties:} &  \alpha & \sim & \alpha\, \eta_{i, j}, & {\rm for\ all}\ \alpha \in TM_n\ :\ s_{\alpha}(i)=j,\\
&&&&&\\
4^{\prime \prime}. & {\rm Fixed\ Ties:} & \alpha & \sim & f_j\, \alpha, & {\rm whenever} \ s_{\alpha}(i)=j\ \&\ \alpha\ {\rm contains\ a}\ f_i.\\
\end{array}
\]
\end{thm}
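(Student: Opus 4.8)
\emph{Proof proposal.} The plan is to run the same two-level strategy used for Theorem~\ref{marktls3}, but with the solid torus replacing $S^3$: on the braid level we feed in the classical Markov theorem for $\mathrm{ST}$ (Theorem~\ref{algcco} specialized to $M=\mathrm{ST}$, i.e. Markov conjugation together with $\sigma$-stabilization; equivalently its geometric version via $L$-moves, Theorem~\ref{geommarkov}), and on the tie level we use the normal form from the mobility property (Proposition~\ref{propmf}(i)) together with the combinatorics of generalized and generalized fixed ties encoded in Equations~\ref{eqf} and \ref{eqf1} and in the relations of Definition~\ref{montlst}. Throughout, the invariant of a tied braid that we track is the pair consisting of the underlying link in $\mathrm{ST}$ and the induced partition of its set of components.

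For the \emph{only if} direction I would check that each of $1''$--$4''$ preserves this pair. Moves $1''$ and $2''$ are the classical Markov moves for $\mathrm{ST}$: by Theorem~\ref{algcco} they preserve the isotopy class of the underlying link in $\mathrm{ST}$, and they neither create nor destroy ties, so (under the Markov isotopy carrying ties along) the partition is unchanged. Moves $3''$ and $4''$ leave the underlying braid fixed, and their side conditions are exactly what is needed to keep the partition fixed: if $s_\alpha(i)=j$ then strands $i$ and $j$ close up into the same component, so adjoining $\eta_{i,j}$ changes nothing; and if in addition $\alpha$ already contains an $f_i$, then the fixed component is already joined to the component through strand $i$, hence $f_j$ ties it to the same component.

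For the \emph{if} direction, suppose $\alpha_1,\alpha_2$ have tie isotopic closures. First, by Proposition~\ref{propmf}(i) write $\alpha_k=\beta_k\gamma_k$ in $TB_{1,n}$ with $\beta_k\in B_{1,n}$ and $\gamma_k$ a product of generalized and generalized fixed ties; since forgetting ties sends $\gamma_k$ to the identity, the underlying link of $\widehat{\alpha_k}$ is $\widehat{\beta_k}$ and the partition is determined by $\gamma_k$ together with the cycle structure of $s_{\beta_k}$. Tie isotopy of the closures forces $\widehat{\beta_1}\simeq\widehat{\beta_2}$ in $\mathrm{ST}$, so by Theorem~\ref{algcco} there is a finite sequence of Markov conjugations and $\sigma$-stabilizations taking $\beta_1$ to $\beta_2$. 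I would lift this sequence to the tied braids, carrying $\gamma_1$ along and re-normalizing after each step via monoid relations: a conjugation $\beta\gamma\mapsto g(\beta\gamma)g^{-1}$ is an instance of move $1''$, and re-normalizing $g\gamma g^{-1}$ back to a tie-product costs only monoid relations (conjugating generalized/fixed ties by a braid again yields generalized/fixed ties, by Equations~\ref{eqf}, \ref{eqf1} and Definition~\ref{montlst}), so the induced partition is preserved; a stabilization $\beta\gamma\mapsto\beta\gamma\sigma_n^{\pm1}$ is an instance of move $2''$, after which the underlying braid becomes $\beta\sigma_n^{\pm1}$ while the tie part and partition persist. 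The outcome is $\beta_1\gamma_1\leadsto\beta_2\gamma_1'$, where now $\beta_2\gamma_1'$ and $\beta_2\gamma_2=\alpha_2$ share the same underlying braid and the same induced partition of components.

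The remaining, and I expect hardest, step is purely combinatorial: two tie parts over the same braid $\beta_2$ inducing the same partition of components must be connected using only moves $3''$, $4''$, Markov conjugation, and the monoid relations. I would prove this by reducing each tie part to a canonical form for its partition. Idempotency and transitivity of generalized ties (Equation~\ref{eqf}(v)) together with the absorption identities $\eta_{i,j}f_i=f_jf_i=f_j\eta_{i,j}$ (Equation~\ref{eqf1}(iii)) let one replace any tie part by a spanning forest of ties on the strands plus at most one fixed tie per class. Then, for strands $i,j$ in the same cycle of $s_{\beta_2}$ one can insert or delete $\eta_{i,j}$ by telescoping move $3''$ along $i,s_{\beta_2}(i),s_{\beta_2}^2(i),\dots,j$ and collapsing with Equation~\ref{eqf}(v); likewise one can transport a fixed tie $f_i$ to $f_j$ using move $4''$ and $\eta_{i,j}f_i=f_j\eta_{i,j}$. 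Hence ties inside a single component, and the strand at which a fixed tie is attached within a component, are immaterial, so all that survives is the partition of components and $\gamma_1'$ is connected to $\gamma_2$. The points needing the most care here are treating the fixed strand on the same footing as the moving strands and verifying that the telescoping respects the side conditions of $3''$ and $4''$; a secondary technical point, in the previous step, is the re-normalization of conjugates of $f_j$ (where $a_1$ and $\sigma_1$ do not commute), which I would handle abstractly through existence of the $\beta\gamma$ normal form rather than by explicit rewriting.
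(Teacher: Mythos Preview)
The paper does not actually supply its own proof of Theorem~\ref{marktlst}; the result is quoted from \cite{F}. That said, your two-level strategy is exactly the one the paper later runs for the generalization, Theorem~\ref{markovhg}: separate via the mobility property into a $B_{1,n}$-part and a tie-part, invoke the classical Markov theorem (Theorem~\ref{algcco}) on the braid-part to align the underlying braids, and then argue combinatorially that two tie-parts inducing the same partition over the same braid are connected by the tie moves. For that last step the paper is terser than you are---it cites \cite{AJ1}, Theorem~3.7 for the $\eta_{i,j}$'s and \cite{F}, Theorem~5 for the fixed ties, packaging them into products $\lambda=\prod_{s_{\beta}(i)=j}\eta_{i,j}$ and $\omega=\prod\omega_{c_j}$ so that $\gamma_1\lambda=\gamma\lambda$ and $\omega\lambda\gamma_1=\omega\lambda\gamma$, rather than via your spanning-forest/telescoping reduction---but these are two presentations of the same combinatorics. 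Your write-up is also more explicit than the paper's on the ``only if'' direction and on the lifting of classical Markov moves through the normal form; the overall architecture is identical.
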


\bigbreak
\bigbreak

\noindent {\bf $L$-moves and tied links in ST.} $L$-moves on tied braids in ST are defined as in the classical case for mixed braids in $H_1:={\rm ST}$, due to the transparency of the ties (for an illustration see Figure~\ref{lmtb2}). 

\begin{figure}[ht]
\begin{center}
\includegraphics[width=3.2in]{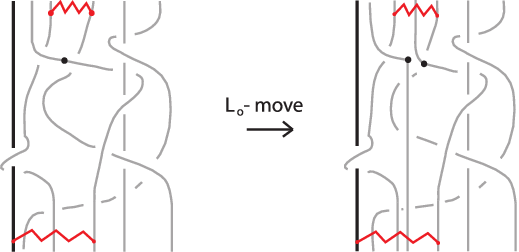}
\end{center}
\caption{An $L$-move on a tied braid in ST.}
\label{lmtb2}
\end{figure}

As noted before, the $L$-moves may replace conjugation and stabilization moves for mixed braids (recall Theorem~\ref{algcco} and Remark~\ref{lmhg}). Moreover, the performance of an $L$-move can not affect the partition induced on the tied mixed braids by the ties. We have the following:

\begin{thm}[{\bf $L$-move equivalence for tied braids in ST}]\label{lmtbs4}
Two tied braids in ST have isotopic closures if and only if any two mixed tied braids representatives of their differ by a finite sequence of the following moves:

\[
\begin{array}{lllcll}
{\rm L-moves} &   &  &  & \\
&&&&\\
{\rm Ties:} &  \alpha & \sim & \alpha\, \eta_{i, j}, & {\rm for\ all}\ \alpha \in TM_{1, n}\ :\ s_{\alpha}(i)=j,\\
&&&&\\
{\rm Fixed\ Ties:} & \alpha & \sim & f_j\, \alpha, & {\rm whenever} \ s_{\alpha}(i)=j\ \&\ \alpha\ {\rm contains\ a}\ f_i.\\
\end{array}
\]
\end{thm}

\section{Tied Links in $L(p,1)$}\label{seclens}

As mentioned in \S~\ref{intro}, we consider the lens spaces $L(p,1)$ as a result of surgery in $S^3$ along the unknot with surgery coefficient $p$. As explained in \cite{LR1, LR2, DL1, DLP, DL3, D3}, isotopy in $L(p, 1)$ can be viewed as isotopy in ST together with the two types of {\it band moves} in $S^3$, which reflect the surgery description of the manifold (see Figure~\ref{bmov}).

\begin{figure}[ht]
\begin{center}
\includegraphics[width=5.4in]{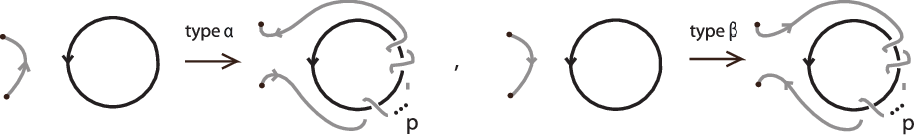}
\end{center}
\caption{The two types of band moves.}
\label{bmov}
\end{figure}

Moreover, since the endpoints of the ties can freely move along the arcs of the mixed link, the performance of band moves will not affect the partition on the components of the link by the ties (see Figure~\ref{bmov1}).

\begin{figure}[ht]
\begin{center}
\includegraphics[width=5.2in]{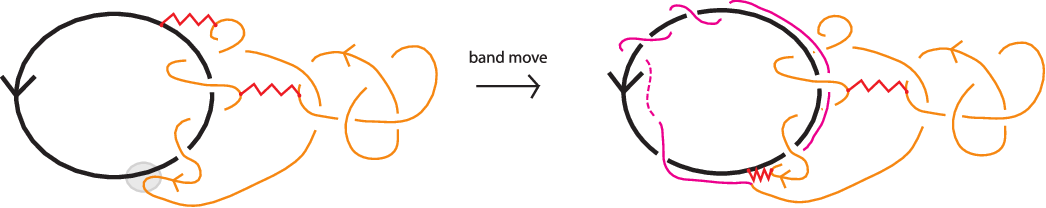}
\end{center}
\caption{A band move on a tied link.}
\label{bmov1}
\end{figure}

Thus, tie isotopy in $L(p,1)$ can be viewed as tie isotopy in ST together with the {\it band moves} in $S^3$. In \cite{DL2} it is also shown that one type of band moves can be obtained from the other type of band moves together with isotopy in ST. We will consider only the type $a$ band moves as illustrated in Figure~\ref{bmov} and thus, tie isotopy between oriented tied links in $L(p,1)$ is reflected in $S^3$ by means of the following theorem:

\begin{thm}[{\bf Reidemeister's Theorem for tied links in} $L(p,1)$]\label{reidlens}
Two oriented tied links in $L(p,1)$ are isotopic if and only if two corresponding mixed tied link diagrams of theirs differ by tie isotopy in ST together with a finite sequence of the type $a$ band moves.
\end{thm}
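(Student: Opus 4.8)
\smallskip
\noindent\emph{Plan of proof.} The plan is to deduce the statement from the corresponding Reidemeister theorem for \emph{untied} links in $L(p,1)$ and then to keep track of the ties under each move appearing there. Recall from \cite{LR1, LR2, DL1} that two oriented links $L, L'$ in $L(p,1)$ are isotopic if and only if any two corresponding mixed link diagrams in $S^3$ differ by a finite sequence of planar isotopies, the Reidemeister and extended Reidemeister moves for mixed links in ST, and band moves of type $a$ and type $b$; moreover, by \cite{DL2}, the type $b$ band moves are consequences of the type $a$ band moves together with isotopy in ST, so the list may be shortened to ST isotopy plus type $a$ band moves. Given tied links $L(P)$ and $L'(P')$ in $L(p,1)$ that are tie isotopic, forgetting the ties shows in particular that $L$ and $L'$ are isotopic in $L(p,1)$, hence the underlying mixed link diagrams are connected by such a finite sequence $M_1, \dots, M_k$ of moves.

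First I would upgrade this sequence to one between the \emph{tied} diagrams. The key fact is that a tie is a non-embedded arc whose endpoints slide freely along the components of $L$ and through which both strands and other ties may pass (see \S~\ref{sectl}). Hence each $M_j$, being supported in a small ball for an (extended) Reidemeister move, or in a neighbourhood of the banding arc for a type $a$ band move, may be assumed to meet the ties only in finitely many transversal points or in sliding endpoints, so that $M_j$ extends verbatim to the tied diagram. Crucially, none of these moves alters which pairs of components are joined by a tie: Reidemeister and extended Reidemeister moves do not change the component set, and, as recorded in Figure~\ref{bmov1}, a type $a$ band move replaces a short arc of a single moving component by an arc running alongside the fixed part, leaving the component set --- and therefore the partition induced by the ties --- unchanged. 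By the very definition of tie isotopy (Definition~\ref{istls3} and its ST analogue in \S~\ref{sectl}), this yields a tie isotopy in ST followed by type $a$ band moves, proving the ``only if'' direction. For the converse, every ST tie isotopy move is an ST isotopy of the underlying link, hence an $L(p,1)$ isotopy, and every type $a$ band move is by construction realised by an isotopy of the underlying link in $L(p,1)$, since it encodes the surgery relation; because, by the same localisation argument, each of these isotopies can be made to preserve the partition determined by the ties, composing them produces the desired tie isotopy in $L(p,1)$.

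The step I expect to be the main obstacle is precisely the claim that a type $a$ band move preserves the partition induced by the ties, since a priori one must rule out both that banding an arc around the fixed part $\widehat{I}$ changes the number of components of the moving part --- which would render the phrase ``the same partition of the set of components'' meaningless across the move --- and that a tie endpoint caught in the banding region is pushed off its component. Both issues are resolved by pinning down the local model of the type $a$ band move of Figure~\ref{bmov}: it modifies a single moving component within a ball, the total number of moving components being unaffected, and any tie meeting that ball can first be slid out of it or, being non-embedded, simply left to pass through the resulting band. Once this local picture is fixed, the rest of the argument is routine bookkeeping over the finite sequence $M_1, \dots, M_k$.
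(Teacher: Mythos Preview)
Your proposal is correct and follows essentially the same approach as the paper: the paper does not give a formal proof environment for this theorem but derives it from the discussion immediately preceding it, namely that (untied) isotopy in $L(p,1)$ is ST isotopy plus band moves (from \cite{LR1, LR2, DL1}), that band moves leave the tie-induced partition unchanged because tie endpoints slide freely along the strands (Figure~\ref{bmov1}), and that type~$b$ band moves reduce to type~$a$ plus ST isotopy (\cite{DL2}). Your write-up simply makes these steps explicit and adds the localisation argument for carrying ties through each move, which is a welcome clarification but not a different route.
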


In order now to translate tie isotopy for links in $L(p,1)$ into braid equivalence, we first use Theorem~\ref{alextlst} to braid the tied links in ST. Then, we define a {\it tied braid band move}, abbreviated as {\it t-bbm}, to be a move between tied mixed braids, which is a band move between their closures. It starts with a little band oriented downward, which, before sliding along a surgery strand, gets one twist {\it positive\/} or {\it negative\/} (see Figure~\ref{bbmfig}).

\begin{figure}[ht]
\begin{center}
\includegraphics[width=2.2in]{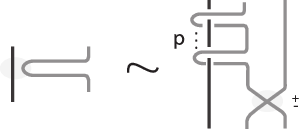}
\end{center}
\caption{The two types of braid band moves.}
\label{bbmfig}
\end{figure}

\begin{lemma}\rm \label{bbmlem}
The braid band moves may always be considered to take place on the {\it last} moving strand and at the {\it bottom} of the tied mixed braid.
\end{lemma}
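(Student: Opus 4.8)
The plan is to adapt the corresponding argument for (untied) mixed braids in ST (as in \cite{LR1, DL2}) to the tied setting, using the mobility property of ties (Proposition~\ref{propmf}) to handle the extra structure. First I would recall that a tied braid band move is, by definition, a band move performed between closures of tied mixed braids; so it suffices to show that any such move can be brought, up to tie isotopy in ST, into the standard position: acting on the last moving strand and at the bottom of the braid. Since the band move itself does not affect the partition of the components (as already observed before Theorem~\ref{reidlens}, because tie endpoints slide freely along arcs), it is enough to track the underlying (untied) mixed braid together with the positions of the ties.

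The key steps, in order: (1) Start with a t-bbm applied somewhere in a tied mixed braid $\alpha \in TB_{1,n}$. Using Proposition~\ref{propmf}(i), push all ties (generalized and generalized fixed ties) to the bottom, writing $\alpha \sim \beta\,\gamma$ with $\beta \in B_{1,n}$ and $\gamma$ a product of generalized ties; the band move now takes place on the untied part $\beta$, with $\gamma$ sitting below. (2) Invoke the corresponding result for untied mixed braids in ST (Lemma~1 of \cite{LR1}, equivalently the statement used in \cite{DL2}): the band move on $\beta$ may be assumed to occur on the last moving strand and at the bottom of $\beta$, at the cost of Markov conjugation and ordinary $L$-moves/stabilizations, all of which are moves of $TB_{1,n}$ that leave the partition unchanged (an $L$-move, by Definition~\ref{lmdefn}, does not change the component partition). (3) Reinsert the ties: since the band move and all the intermediate moves have been arranged to happen at the bottom on the last strand, and since the ties $\gamma$ were already pushed to the bottom, one then slides $\gamma$ past the little band using Equations~\ref{eqf} and \ref{eqf1}, or simply notes that the tie endpoints can be dragged along the band — so the band may be taken to be even lower than all ties. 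This yields the t-bbm at the bottom, on the last moving strand.

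The main obstacle I expect is bookkeeping the interaction between the twisting little band and the fixed ties / generalized fixed ties $f_j$: when the band slides along the surgery strand $a_1$, a generalized fixed tie attached near that region must be commuted past the band, and one needs relations of the type in Eq.~\ref{eqf1}(ii) (the $f_j\sigma_i = \sigma_i f_{s_i(j)}$ relation) together with the transparency/elasticity of ties to see that nothing obstructs the isotopy. I would argue that because a tie is not an embedded arc — strands (including the band) may pass through it — the band can always be slid to lie below every tie, so the only genuine content is the untied statement from \cite{LR1, DL2} plus the remark that none of the moves involved alter the partition. A secondary point to check carefully is that the extra twist on the band (positive or negative, Figure~\ref{bbmfig}) is unaffected by this repositioning, which follows since the twist is localized on the band itself and commutes with moving the band downward.
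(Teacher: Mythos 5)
Your proposal is correct and follows essentially the same route as the paper: reduce to the untied statements (conjugation to bring the move to the last moving strand, and pulling the little band to the bottom as in Lemma~5 of \cite{LR2} and Lemma~1 of \cite{DLP}), then observe that neither these moves nor the band move itself alters the partition induced by the ties. The only point the paper makes explicit that you leave as a gesture is the re-indexing of ties attached to the last strand once the new strand is created, recorded there as the commutations $f_n\,{a^{\prime}_n}^p\,\sigma_n^{\pm 1}\sim {a^{\prime}_n}^p\,\sigma_n^{\pm 1}\,f_{n+1}$ and $\eta_{n-1}\,{a^{\prime}_n}^p\,\sigma_n^{\pm 1}\sim {a^{\prime}_n}^p\,\sigma_n^{\pm 1}\,\eta_{n-1,n+1}$.
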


\begin{proof}
We first observe that the bbm's can be performed on a any moving strand of the tied mixed braid, but using conjugation (i.e. tie isotopy in ST), we can always consider the bbm to take place on the last moving strand of the tied mixed braid (see Lemma~1 \cite{DLP} and consider the fact that the endpoints of the ties may move freely). 

\smallbreak

In order to prove now that a bbm can always be assumed to take place at the bottom of a tied mixed braid, we follow the proof of Lemma~5 in \cite{LR2}. Consider a little band of a mixed braid $I\bigcup \beta_1$ approaching the fixed part $I$. Using tie isotopy in ST, we can pull this little band along $I$ in order to bring it to the bottom of the mixed braid. The result is not a mixed braid, so we braid it in order to obtain a mixed braid again, say $I\bigcup \beta_2$. Obviously, $I\bigcup \beta_1$ is tie isotopic to $I\bigcup \beta_2$, since the shifting of the band to the bottom of the mixed braid can be realized by isotopy in ST and the partition of the set of components is unchanged. Perform now a bbm on $I\bigcup \beta_1$ and $I\bigcup \beta_2$ and call the resulting tied mixed braids $I\bigcup b_1$ and $I\bigcup b_2$. Obviously, $I\bigcup b_1\, \sim\, I\bigcup b_2$ by the same tie isotopies as $I\bigcup \beta_1 \, \sim\, I\bigcup \beta_2$. 

\smallbreak

Moreover, as shown in \cite{DL1}, the braid band moves will have the following algebraic expression:

\begin{equation}\label{algbbm}
\beta \sim \beta^{\prime} {a^{\prime}_n}^p \sigma_n^{\pm 1}
\end{equation}

\noindent {\rm where} ${a^{\prime}_n}=\sigma_n \ldots \sigma_1 a_1 \sigma_1^{-1} \ldots \sigma_n^{-1}$ and $\beta^{\prime}$ is the word $\beta$ with the substitutions:

$$a_1^{\pm 1} \longleftrightarrow (\sigma_1^{-1} \ldots \sigma_{n-1}^{-1} \cdot  \sigma_n^2\cdot \sigma_{n-1}\ldots \sigma_{1}a_1)^{\pm 1} $$

Moreover, the performance of a t-bbm does not affect the partition of the components of the tied mixed braid since by simple calculations we have:

\[
\begin{array}{lcll}
f_i\, {a^{\prime}_n}^p\, \sigma_n^{\pm 1} & \sim &  {a^{\prime}_n}^p\, \sigma_n^{\pm 1}\, f_{i}, & {\rm for}\ 1\leq i \leq n-2\\
&&&\\
f_n\, {a^{\prime}_n}^p\, \sigma_n^{\pm 1} & \sim &  {a^{\prime}_n}^p\, \sigma_n^{\pm 1}\, f_{n+1}, &\\
&&&\\
\eta_i\, {a^{\prime}_n}^p\, \sigma_n^{\pm 1} & \sim &  {a^{\prime}_n}^p\, \sigma_n^{\pm 1}\, \eta_{i}, & {\rm for}\ 1\leq i \leq n-2\\
&&&\\
\eta_{n-1}\, {a^{\prime}_n}^p\, \sigma_n^{\pm 1} & \sim &  {a^{\prime}_n}^p\, \sigma_n^{\pm 1}\, \eta_{n-1, n+1}. &\\
\end{array}
\]

The proof is now concluded.
\end{proof}

From Lemma~\ref{bbmlem} and the discussion above it follows that tie isotopy in $L(p,1)$ is translated on the level of tied mixed braids by means of the following theorem:

\begin{thm}[{\bf The analogue of the Markov Theorem in} $L(p,1)$] \label{markovlens}
 Let $L_{1} ,L_{2}$ be two oriented tied links in $L(p,1)$ and let $I\cup \beta_{1} ,{\rm \; }I\cup \beta_{2}$ be two corresponding tied mixed braids in $S^{3}$. Then $L_{1}$ is isotopic to $L_{2}$ in $L(p,1)$ if and only if $I\cup \beta_{1}$ is equivalent to $I\cup \beta_{2}$ in $\mathop{\cup }\limits_{n=1}^{\infty } TM_{1,n}$ by the following moves:
\[
\begin{array}{lllcll}
1^{\prime \prime \prime}. & {\rm Markov\ Conjugation:} &  \alpha\, \beta & \sim & \beta\, \alpha, & {\rm for\ all}\ \alpha,\, \beta \in TM_{1,n},\\
&&&&&\\
2^{\prime \prime \prime}. & {\rm Markov\ Stabilization:} &  \alpha & \sim & \alpha\, \sigma_n^{\pm 1}, & {\rm for\ all}\ \alpha \in TM_{1,n},\\
&&&&&\\
3^{\prime \prime \prime}. & {\rm Ties:} &  \alpha & \sim & \alpha\, \eta_{i, j}, & {\rm for\ all}\ \alpha \in TM_{1,n}\ :\ s_{\alpha}(i)=j,\\
&&&&&\\
4^{\prime \prime \prime}. & {\rm Fixed\ Ties:} & \alpha & \sim & f_j\, \alpha, & {\rm whenever} \ s_{\alpha}(i)=j\ \&\ \alpha\ {\rm contains\ a}\ f_i.\\
&&&&&\\
5^{\prime \prime \prime}. & {\rm t-bbm's:} & \alpha & \sim & \alpha^{\prime}{a^{\prime}_n}^p \sigma_n^{\pm 1}, & \alpha^{\prime}\in TM_{1, n+1}\quad (Eq.~\ref{algbbm}).\\
\end{array}
\]
\end{thm}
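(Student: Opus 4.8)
The plan is to prove the two implications separately, the ``if'' direction being the routine one. For it I would check that each of the five moves is an instance of tie isotopy in $L(p,1)$. Moves $1'''$--$4'''$ are literally moves $1''$--$4''$ of Theorem~\ref{marktlst}, so tied mixed braids related by any of them have tie isotopic closures already in ST, hence a fortiori in $L(p,1)$; in particular the partition of the set of components is preserved. A t-bbm (move $5'''$) is by construction a type $a$ band move performed between the closures of the two tied mixed braids; such band moves realize the surgery presentation of $L(p,1)$, so they preserve tie isotopy in $L(p,1)$ by Theorem~\ref{reidlens}. That a t-bbm does not change the partition follows from the fact that the endpoints of the ties slide freely along the link (cf.\ Figure~\ref{bmov1}), together with the commutation relations displayed at the end of the proof of Lemma~\ref{bbmlem}.

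For the ``only if'' direction, suppose $L_1$ and $L_2$ are tie isotopic in $L(p,1)$. By Theorem~\ref{reidlens} there is a finite sequence of tied mixed link diagrams
\[
L_1 = D^{(0)},\ D^{(1)},\ \ldots,\ D^{(k)} = L_2,
\]
in which each $D^{(i)}$ is obtained from $D^{(i-1)}$ either by tie isotopy in ST or by a single type $a$ band move. Using Theorem~\ref{alextlst} I would choose, for every $i$, a tied mixed braid $\beta^{(i)}\in \bigcup_n TB_{1,n}$ whose closure is tie isotopic in ST to $D^{(i)}$, and then compare consecutive braids. When $D^{(i)}$ comes from $D^{(i-1)}$ by tie isotopy in ST, the closures $\widehat{\beta^{(i-1)}}$ and $\widehat{\beta^{(i)}}$ are tie isotopic in ST, so Theorem~\ref{marktlst} gives that $\beta^{(i-1)}$ and $\beta^{(i)}$ differ by a finite sequence of moves $1'''$--$4'''$. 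When $D^{(i)}$ comes from $D^{(i-1)}$ by a type $a$ band move, I would first move the little band of the band move to the bottom of a braiding of $D^{(i-1)}$ and onto the last moving strand, using tie isotopy in ST --- which, by the case just treated, amounts to moves $1'''$--$4'''$; this is exactly the normalization performed inside the proof of Lemma~\ref{bbmlem}. Carrying out the band move there and rebraiding the result by Theorem~\ref{alextlst} produces a tied mixed braid $\delta^{(i)}$ that, by the computation of \cite{DL1} recorded in Eq.~\ref{algbbm}, is obtained from the normalized braiding of $D^{(i-1)}$ by a single move $5'''$; and since $\widehat{\delta^{(i)}}$ is tie isotopic in ST to $D^{(i)}$, Theorem~\ref{marktlst} shows $\delta^{(i)}$ and $\beta^{(i)}$ differ by moves $1'''$--$4'''$. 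Concatenating over all $k$ steps expresses $\beta^{(0)}$ in terms of $\beta^{(k)}$ by moves $1'''$--$5'''$ only, which is the assertion.

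The step I expect to be the main obstacle is the band-move step, and the delicate point there is to be sure that an arbitrary type $a$ band move on the closure of a tied mixed braid really can be normalized, using ST--tie isotopy alone, to a band move performed at the bottom and on the last moving strand, and that after rebraiding it is captured precisely by the algebraic move $5'''$, with no residual generalized (or generalized fixed) ties surviving. The commutation relations at the end of the proof of Lemma~\ref{bbmlem} dispose of the effect of the t-bbm on ties, so the partition bookkeeping goes through; the remaining ingredient is the standard fact that rebraiding after a band move, although it may create extra crossings, changes the braid only up to moves $1'''$--$4'''$.
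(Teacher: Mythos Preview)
Your proposal is correct and follows exactly the route the paper takes: the paper does not spell out a proof of Theorem~\ref{markovlens} but simply states that it follows ``from Lemma~\ref{bbmlem} and the discussion above,'' i.e.\ from Theorem~\ref{reidlens} (decomposing $L(p,1)$--isotopy into ST--isotopy plus band moves), Theorem~\ref{marktlst} (handling the ST steps via moves $1'''$--$4'''$), and Lemma~\ref{bbmlem} (normalizing a band move to the last strand and to the bottom, and checking the partition is unaffected). You have simply made this implicit argument explicit, including the bookkeeping for the band-move step, so there is nothing to add.
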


The properties that ties satisfy and Theorem~\ref{lmtbs4}, suggest that moves $1^{\prime \prime \prime}$ and $2^{\prime \prime \prime}$ of Theorem~\ref{markovlens} may be replaced by the $L$-moves (see also \cite{DL1, LR1}) leading to an $L$-move equivalence for tied braids in $L(p, 1)$:

\begin{thm}[{\bf $L$-move equivalence for tied braids in $L(p, 1)$}]\label{lmtbs5}
Two tied braids in $L(p, 1)$ have isotopic closures if and only if any two mixed tied braids representatives of their differ by a finite sequence of the following moves:

\[
\begin{array}{lllcll}
{\rm L-moves} &   &  &  & \\
&&&&\\
{\rm Ties:} &  \alpha & \sim & \alpha\, \eta_{i, j}, & {\rm for\ all}\ \alpha \in TM_{1,n}\ :\ s_{\alpha}(i)=j,\\
&&&&\\
{\rm Fixed\ Ties:} & \alpha & \sim & f_j\, \alpha, & {\rm whenever} \ s_{\alpha}(i)=j\ \&\ \alpha\ {\rm contains\ a}\ f_i.\\
&&&&&\\
{\rm t-bbm's:} & \alpha & \sim & \alpha^{\prime}{a^{\prime}_n}^p \sigma_n^{\pm 1}, & \alpha^{\prime}\in TM_{1, n+1}\quad (Eq.~\ref{algbbm}).\\
\end{array}
\]
\end{thm}

\section{Tied Links in $M$}\label{secM}

In this section we introduce the concept of tied links in $M\, =\, H_g$ or $S^3\backslash \hat{I}_g$ and their diagrams. Tied links in $M$ naturally generalize classical links in $M$ and tied links in $S^3$ and ST. The difference between tied links in $M$ and tied links in ST lies in the fixed part $I_g$ of the tied mixed link and also that in the set $P$, the set of ties, various {\it fixed} ties are allowed. For an illustration of a tied link in $H_3$, the handlebody of genus 3, see Figure~\ref{tknotHg}, while for an illustration of a tied link in $S^3\backslash \hat{I}_3$, the complement of the 3-component unlink see Figure~\ref{tknotug}. A diagram of $L(P)$, is a mixed link diagram of $L$ in $S^3$ projected on the plane of $I_g$ (equipped with the top-to-bottom direction), equipped with ties connecting pairs of points in the set of ties $P$.

\begin{figure}[ht]
  \begin{subfigure}[b]{0.4\textwidth}
    \includegraphics[width=0.9\textwidth]{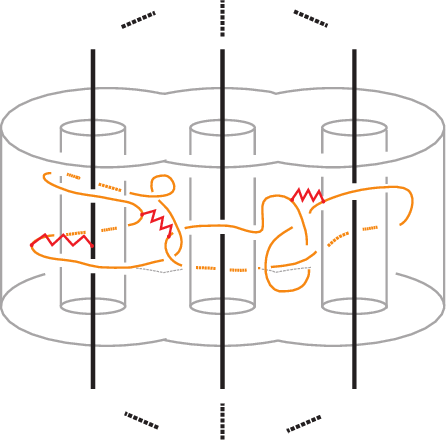}
    \caption{A tied link in $H_3$.}
    \label{tknotHg}
  \end{subfigure}
  \begin{subfigure}[b]{0.5\textwidth}
    \includegraphics[width=1\textwidth]{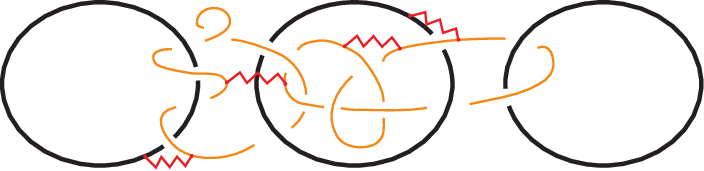}
    \caption{A tied knot in $S^3\backslash \hat{I}_3$.}
    \label{tknotug}
  \end{subfigure}
\end{figure}

Similarly to $S^3$ and ST, two oriented tied links $L(P), L^{\prime}(P^{\prime})$ in $M$, are tie isotopic, denoted by $L(P)\, \sim\, L^{\prime}(P^{\prime})$, if:
\begin{itemize}
\item[$\bullet$] $L\, \sim\, L^{\prime}$ in $M$ and
\smallbreak
\item[$\bullet$] $P$ and $P^{\prime}$ define the same partition in the set of components of $L$ and $L^{\prime}$ respectively.
\end{itemize}

\subsection{The tied braid monoids $TM_{g, n}$}

We now introduce the tied mixed braid monoids $TM_{g, n}$ in order to obtain the analogues of the Alexander and the Markov theorems for tied links in $M$. Denote the {\it fixed tie} joining the $j^{th}$ fixed strand with the $1^{st}$ moving strand by $\phi_j$ (see Figure~\ref{gph}).

\begin{figure}[ht]
\begin{center}
\includegraphics[width=2.2in]{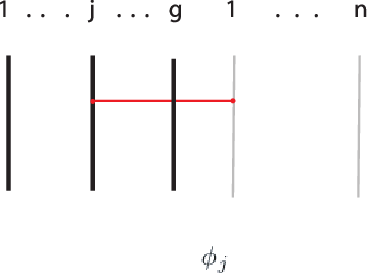}
\end{center}
\caption{The fixed tie $\phi_j$.}
\label{gph}
\end{figure}

\begin{defn}\label{tmbmg}\rm
We define the {\it tied mixed braid monoid} $TM_{g, n}$, as the monoid generated by $\alpha_1, \ldots, \alpha_g, \sigma_1, \ldots, \sigma_{n-1}$, the usual braid generators of $B_{g, n}$, the generators $\phi_1, \ldots, \phi_g$, called {\it fixed ties} and the generators $\eta_1, \ldots, \eta_{n-1}$, called ties, satisfying the relations~(\ref{B}) of $B_{g, n}$ and relations from Definition~\ref{montlst}, together with the following relations:

\[
\begin{array}{rcl}
\alpha_i\, \eta_j & = & \eta_j\, \alpha_i, \qquad {\rm for\ all}\ i, j\\
&&\\
\alpha_i\, \phi_j & = & \phi_j\, \alpha_i, \qquad {\rm for\ all}\ i, j\\
&&\\
\eta_i\, \phi_j & = & \phi_j\, \eta_i, \qquad {\rm for\ all}\ j\ {\rm and}\ i>1,\\
&&\\
\phi_i\, \eta_1 & = & \phi_i\, \sigma_1\, \phi_i\, \sigma_1^{-1}\ =\ \sigma_1\, \phi_i\, \sigma_1^{-1}\, \eta_1\\
&&\\
\phi_i\, \phi_j & = & \phi_j\, \phi_i, \qquad {\rm for\ all}\ i, j\\
&&\\
\phi_j^2 & = & \phi_j, \qquad \ \ \ {\rm for\ all}\ j\\
&&\\
\phi_j\, \sigma_i & = & \sigma_i\, \phi_j, \qquad {\rm for\ all}\ i, j\\
&&\\
\sigma_i \ldots \sigma_1\, \phi_j\, \sigma_1^{-1} \ldots \sigma_i^{-1} & = & \sigma_i^{-1} \ldots \sigma_1^{-1} \, \phi_j\, \sigma_1 \ldots \sigma_i,\, \quad {\rm for\ all}\ i, j\\
\end{array}
\]
\end{defn}

\begin{figure}[ht]
\begin{center}
\includegraphics[width=5.5in]{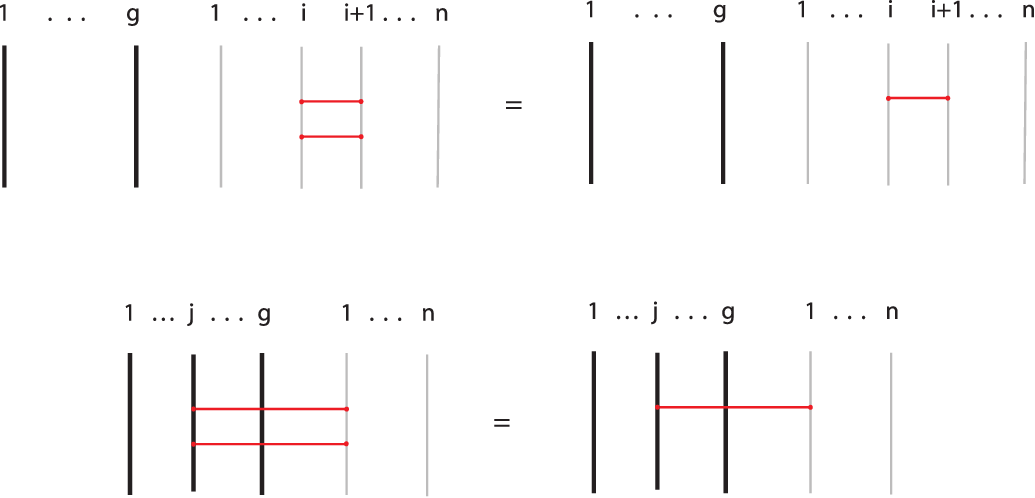}
\end{center}
\caption{The relations $\eta_i^2=\eta_i$ and $\phi_j^2=\phi_j$.}
\label{relt}
\end{figure}

\begin{figure}[ht]
\begin{center}
\includegraphics[width=5.5in]{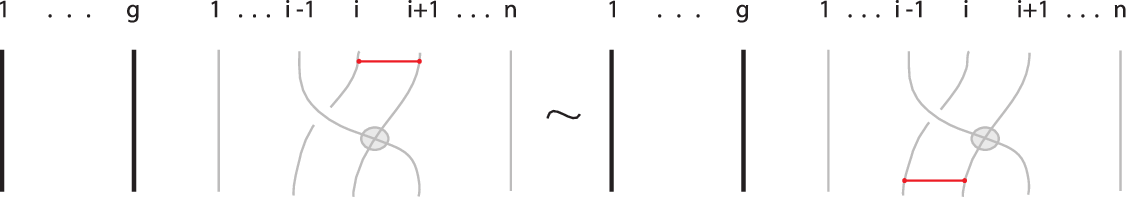}
\end{center}
\caption{Relations in $TM_{g, n}$.}
\label{relt1}
\end{figure}

\begin{figure}[ht]
\begin{center}
\includegraphics[width=4.1in]{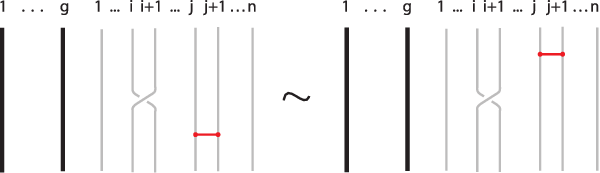}
\end{center}
\caption{The relations $\sigma_i\, \eta_{j}\, =\, \eta_{j}\, \sigma_i$.}
\label{relt2}
\end{figure}

\begin{figure}[ht]
\begin{center}
\includegraphics[width=4.1in]{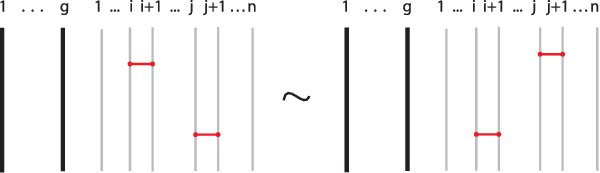}
\end{center}
\caption{The relations $\eta_i\, \eta_{j}\, =\, \eta_{j}\, \eta_i$.}
\label{relt3}
\end{figure}

Note that $TM_{g, n} \subset TM_{g, n+1}$, and thus, $TM_{g,\, \infty}\, :=\, \cup_{n\geq 1}\, TM_{g, n}$, the inductive limit, is well defined.

\subsection{Generalized Ties}

Following the same ideas as in \cite{AJ1, F}, we first need to get the analogue of Propositions~\ref{propaj}, \ref{propmf} for tied links in $M$.

\begin{defn}\rm
Define the {\it generalized fixed tie}, $\phi_{i, j}$, joining the $i^{th}$ fixed strand with the $j^{th}$ moving strand of the tied mixed braid as follows:

\[
\phi_{i, j}\ :=\ \sigma_{j-1}\, \ldots\, \sigma_1\, \phi_{i}\, \sigma_{1}^{-1}\, \ldots,\, \sigma_{j-1}^{-1},\ {\rm for}\ 2\leq j \leq n.
\]
\end{defn}

\begin{figure}[ht]
\begin{center}
\includegraphics[width=5.5in]{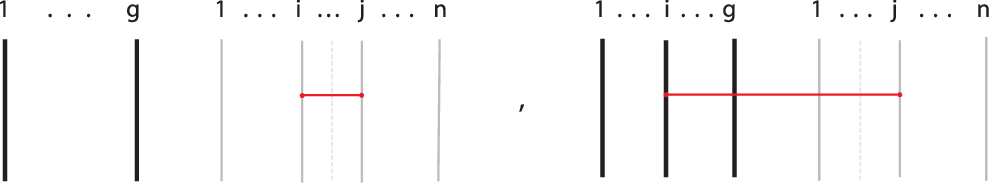}
\end{center}
\caption{The generalized ties $\eta_{i, j}$ \& the generalized fixed ties $\phi_{i, j}$.}
\label{gt1}
\end{figure}

\begin{figure}[ht]
\begin{center}
\includegraphics[width=5.5in]{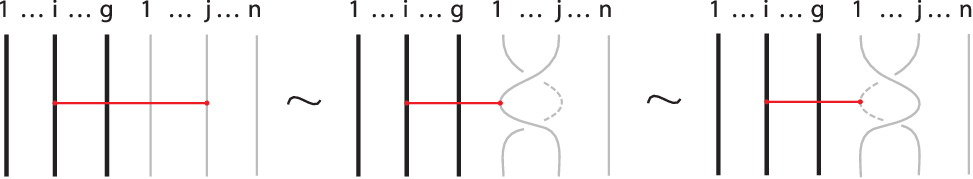}
\end{center}
\caption{Relations between generalized fixed ties.}
\label{gt3}
\end{figure}

Similarly to Equations~\ref{eqf} and \ref{eqf1}, and as a consequence of straightforward calculations using the relations in Definition~\ref{tmbmg}, we obtain the following:

\begin{lemma}\label{gtrel}\rm
The generalized fixed ties and the generalized ties satisfy the following relations:
\[
\begin{array}{crcll}
(i^{\prime}) & \eta_k\, \phi_{i, j} & = & \phi_{i, j}\, \eta_k, & {\rm for\ all}\ i, j, k \\
&&&\\
(ii^{\prime}) & \phi_{i, j}\, \sigma_{k} & = & \sigma_{k}\, \phi_{i, s_k(j)}, & {\rm for\ all}\ i, j, k \\
&&&\\
(iii^{\prime}) &\eta_{i, j}\, \phi_{k, i} & = & \phi_{k, i}\, \phi_{k, j}\ & =\ \, \phi_{k, j}\, \eta_{i, j},\ 1\leq i \, \neq\, j\leq n,\, \forall k. \\
\end{array}
\]
\end{lemma}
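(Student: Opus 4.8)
The plan is to prove Lemma~\ref{gtrel} by direct computation from the defining relations of $TB_{g,n}$ in Definition~\ref{tmbmg}, mimicking the arguments used in \cite{AJ1} and \cite{F} for the type-A and type-B cases (Equations~\ref{eqf} and \ref{eqf1}), since the only new ingredient here is the presence of $g$ fixed ties $\phi_1,\dots,\phi_g$ and their generalized versions $\phi_{i,j}$, all of which commute with one another and with the $\alpha_i$, so that each index $i$ can essentially be treated in isolation. First I would record the key elementary facts: that $\phi_j$ commutes with every $\sigma_i$ (so conjugating $\phi_j$ by a block of $\sigma$'s only changes the ``moving'' index $j$), that $\phi_j$ commutes with every $\eta_i$ for $i>1$, and that the only genuinely nontrivial interaction is the exchange relation $\phi_i\eta_1=\sigma_1\phi_i\sigma_1^{-1}\eta_1=\phi_i\sigma_1\phi_i\sigma_1^{-1}$ together with the ``symmetric conjugate'' relation $\sigma_i\ldots\sigma_1\phi_j\sigma_1^{-1}\ldots\sigma_i^{-1}=\sigma_i^{-1}\ldots\sigma_1^{-1}\phi_j\sigma_1\ldots\sigma_i$.

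For part $(i')$, the idea is to unfold $\phi_{i,j}=\sigma_{j-1}\ldots\sigma_1\,\phi_i\,\sigma_1^{-1}\ldots\sigma_{j-1}^{-1}$ and push $\eta_k$ across. When $k$ is disjoint from $\{1,\dots,j-1\}$ in the usual far-commutation sense, this is immediate from $\eta_k\sigma_\ell=\sigma_\ell\eta_k$ and $\eta_k\phi_i=\phi_i\eta_k$; when $k$ is small one uses the mixed relation $\eta_i\sigma_j\sigma_i^\epsilon=\sigma_j\sigma_i^\epsilon\eta_j$ from Definition~\ref{montls3} to slide $\eta_k$ through the conjugating block, exactly as $\eta_{i,j}$ commutation is handled in \cite{AJ1}, and the $\phi_i$ in the middle is transparent to this since $\phi_i\eta_\ell=\eta_\ell\phi_i$ for $\ell>1$ and the $\ell=1$ case is absorbed when the sliding $\eta$ reaches index $1$. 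For part $(ii')$, the relation $\phi_{i,j}\sigma_k=\sigma_k\phi_{i,s_k(j)}$ is the fixed-tie analogue of Eq.~\ref{eqf1}$(ii)$: unwind $\phi_{i,j}$, and the conjugating word $\sigma_{j-1}\ldots\sigma_1$ together with $\sigma_k$ is just a positive lift of the symmetric-group identity $\pi_j s_k = s_{s_k(j)}\pi_{s_k(j)}$ where $\pi_m$ is the cycle realized by $\sigma_{m-1}\ldots\sigma_1$; braid-relation manipulations (the braid relations are part of the presentation) turn this combinatorial identity into the claimed one, again with $\phi_i$ playing no active role because $\phi_i\sigma_\ell=\sigma_\ell\phi_i$.

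Part $(iii')$ is the substantive one and I expect it to be the main obstacle. It asserts $\eta_{i,j}\,\phi_{k,i}=\phi_{k,i}\,\phi_{k,j}=\phi_{k,j}\,\eta_{i,j}$, the ``transitivity of ties'' relation saying that a fixed tie from strand $k$ to strand $i$, composed with an ordinary tie between $i$ and $j$, forces a fixed tie from $k$ to $j$. The strategy is to first establish it in the base case, namely the relation $\phi_k\eta_1=\phi_k\sigma_1\phi_k\sigma_1^{-1}=\sigma_1\phi_k\sigma_1^{-1}\eta_1$ already in Definition~\ref{tmbmg}, which is precisely $(iii')$ for $(i,j)=(1,2)$ after recognizing $\phi_{k,2}=\sigma_1\phi_k\sigma_1^{-1}$ and $\eta_{1,2}=\eta_1$; then propagate to general $i<j$ by conjugating the whole base relation by $\sigma_{j-1}\ldots\sigma_{i+1}\cdot\sigma_{i-1}\ldots\sigma_1$ (a positive braid word taking the pair $(1,2)$ to $(i,j)$), using parts $(i')$ and $(ii')$ just proved to track how $\eta$'s and $\phi$'s transform under this conjugation, together with Eq.~\ref{eqf}$(v)$ for the behaviour of the ordinary generalized ties. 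The delicate point will be bookkeeping the conjugation so that $\phi_{k,i}$ and $\phi_{k,j}$ land on the correct strands simultaneously and the three-term equality is preserved; this is where the symmetric-conjugate relation $\sigma_i\ldots\sigma_1\phi_j\sigma_1^{-1}\ldots\sigma_i^{-1}=\sigma_i^{-1}\ldots\sigma_1^{-1}\phi_j\sigma_1\ldots\sigma_i$ is needed, to guarantee the ambiguity in how one routes the conjugating strand past the fixed tie does not matter. Once these three identities are in hand, the lemma follows, and they are exactly the input needed for the subsequent mobility property (the analogue of Propositions~\ref{propaj} and~\ref{propmf}) for tied mixed braids in $M$.
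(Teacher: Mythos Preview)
Your approach is exactly what the paper does: it states the lemma ``as a consequence of straightforward calculations using the relations in Definition~\ref{tmbmg}'' and gives no further detail, so your plan of unfolding $\phi_{i,j}$ and $\eta_{i,j}$ and pushing generators through via the defining relations (mirroring the arguments for Equations~\ref{eqf} and~\ref{eqf1} in \cite{AJ1} and \cite{F}) is precisely the intended argument, only spelled out. One small caution: when you identify the base case of $(iii')$ with the defining relation $\phi_i\eta_1=\phi_i\sigma_1\phi_i\sigma_1^{-1}=\sigma_1\phi_i\sigma_1^{-1}\eta_1$, note that $(iii')$ has $\eta_1\phi_k$ on the left rather than $\phi_k\eta_1$, so you are implicitly using $\eta_1\phi_k=\phi_k\eta_1$; this is part of the type-B relations inherited from Definition~\ref{montlst} but is not listed separately in Definition~\ref{tmbmg} (where the commutation $\eta_i\phi_j=\phi_j\eta_i$ is written only for $i>1$), so you should make that step explicit.
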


Denote by $TM_{g, n}^{\sim}$ the subset of $TM_{g, n}$ generated by the elements $\eta_{i, j}$ and $\phi_{k, m}$, $1\leq i, j \leq n-1$ and $1\leq k \leq g, 1\leq m \leq n$. An immediate consequence of Lemma~\ref{gtrel} and Relations~\ref{eqf} is the following result, which is the analogue of Propositions~\ref{propaj}(i) \& \ref{propmf}(i):

\begin{prop}[Mobility property]\label{mobprop} \rm
Let $\alpha$ be a tied braid in $TM_{g, n}$. Then, $\alpha$ can be written as $\alpha\, =\, \gamma\, \beta$ (or $\alpha\, =\, \beta\, \gamma$), where $\gamma\in B_{g, n}$ and $\beta\in TM_{g, n}^{\sim}$.
\end{prop}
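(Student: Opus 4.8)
\textbf{Proof proposal for the Mobility property (Proposition~\ref{mobprop}).}

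The plan is to mimic the proofs of Proposition~\ref{propaj}(i) from \cite{AJ1} and Proposition~\ref{propmf}(i) from \cite{F}, treating the two families of "tie-type" generators ($\eta$'s and $\phi$'s) uniformly. First I would argue that it suffices to show that any word in the generators of $TB_{g, n}$ can be rewritten so that every occurrence of an $\eta_i$ or a $\phi_j$ is pushed to the right past all braid-type generators $\sigma_k$ and $\alpha_i$, at the cost of replacing the plain tie by a generalized tie $\eta_{i, j}$ or a generalized fixed tie $\phi_{k, m}$. Indeed, once all tie-type letters have been collected at the right end, the prefix is a word in the $\sigma_k$ and $\alpha_i$ alone, hence an element $\gamma\in B_{g, n}$, and the suffix is a word in the generalized ties, hence an element $\beta\in TB_{g, n}^{\sim}$; this is precisely $\alpha = \gamma\,\beta$. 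The version $\alpha=\beta\,\gamma$ follows symmetrically by pushing tie-type letters to the left instead (or by applying the result to the reversed word and using that the defining relations are stable under reversal together with inversion of the $\sigma$'s).

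The core of the argument is an induction on the number of braid-type letters lying to the right of the rightmost tie-type letter (equivalently, on a suitable complexity measure of the word). For the inductive step I would take the rightmost tie-type generator that still has a braid-type generator to its right, say it is immediately followed by some $\sigma_k^{\pm 1}$ or $\alpha_i^{\pm 1}$, and commute the two. The commutation rules are exactly the relations recorded in Definition~\ref{tmbmg} together with the defining relations of $TB_n$ and $TB_{1,n}$ imported from Definitions~\ref{montls3} and \ref{montlst}: for a generalized tie $\eta_{i, j}$ these are Relations~\ref{eqf}(i)--(iv), which show that $\sigma_k\,\eta_{i,j}$ equals $\eta_{s_k(i), s_k(j)}\,\sigma_k$ in all cases (and $\eta_{i,j}$ commutes with every $\alpha_r$ by the relation $\alpha_i\eta_j=\eta_j\alpha_i$); for a generalized fixed tie $\phi_{k, m}$ these are Lemma~\ref{gtrel}(ii$^\prime$), $\phi_{k, m}\,\sigma_l = \sigma_l\,\phi_{k, s_l(m)}$, together with $\alpha_i\,\phi_j=\phi_j\,\alpha_i$ and $\phi_j\,\sigma_i=\sigma_i\,\phi_j$ from Definition~\ref{tmbmg}. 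In every case the braid-type letter moves one position to the left while the (generalized) tie stays a (generalized) tie of the appropriate type, so the complexity measure strictly decreases, and the induction closes. One must also note at the outset that any plain $\eta_i$ is already the generalized tie $\eta_{i, i+1}$ and any plain $\phi_j$ is $\phi_{j, 1}$, so the base generators are themselves in $TB_{g, n}^\sim$; and whenever during the process a product of two generalized ties meeting at a common strand appears, Relations~\ref{eqf}(v), \ref{eqf1}(iii), and Lemma~\ref{gtrel}(iii$^\prime$) keep it inside $TB_{g, n}^\sim$ (these are only needed to identify the partition, not for the mobility statement itself).

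The step I expect to be the main obstacle is the bookkeeping when a braid-type generator is dragged leftward past \emph{several} consecutive tie-type letters of \emph{mixed} kind — e.g. a $\sigma_1^{\pm 1}$ passing a block containing both $\phi_i$'s and $\eta_1$'s. Here the relations $\phi_i\,\eta_1=\phi_i\,\sigma_1\,\phi_i\,\sigma_1^{-1}=\sigma_1\,\phi_i\,\sigma_1^{-1}\,\eta_1$ and $\eta_i\,\eta_j\sigma_i = \eta_j\sigma_i\eta_j = \sigma_i\eta_i\eta_j$ (and the analogous fixed-tie versions) show the commutation is \emph{not} a simple transposition but may momentarily create extra $\sigma_1^{\pm 1}$'s; I would handle this by choosing the complexity measure carefully (e.g. lexicographic on (number of $\sigma/\alpha$ letters to the right of the last tie letter, total word length)) so that the temporary lengthening is still a net decrease, or alternatively by first using the mobility result already available inside the "type B" submonoid $TB_{1, n}$ to normalize such blocks before moving on. Once this is set up, the remaining verifications are the routine relation-chasing already illustrated in Figures~\ref{relt}--\ref{gt3}, and the proposition follows.
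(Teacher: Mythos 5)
Your proposal is correct and follows essentially the same route as the paper, which states Proposition~\ref{mobprop} without a written-out proof, simply as an immediate consequence of Relations~\ref{eqf} and Lemma~\ref{gtrel} in analogy with Propositions~\ref{propaj}(i) and \ref{propmf}(i) of \cite{AJ1, F}; your induction pushing (generalized) ties past braid-type letters is exactly the argument being invoked there. The only caveat is that commuting $a_r$ past a generalized tie $\eta_{1,j}$ or $\phi_{k,j}$ is not literally one of the listed defining relations (those only cover plain $\eta_i$, $\phi_j$) and needs the same kind of short derivation from transparency that the paper also leaves implicit.
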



\subsection{The analogues of the Alexander \& the Markov Theorems for $H_g$ \& $S^3\backslash \hat{I}_g$}

We now state and prove the analogues of the Alexander and the Markov Theorems for tied links in $M$.

\begin{thm}[{\bf The analogue of the Alexander theorem for tied links in} $M$] \label{alextlm}
Every oriented tied link can be obtained by closing a tied mixed braid.
\end{thm}

\begin{proof}
The proof is analogous to that of \cite{LR1, OL}, where we have to take care of the ties that are present. More precisely:

\smallbreak

\begin{itemize} 
\item[$H_g$:] We start from a tied mixed link in $H_g$ and before applying the braiding process described in \S~\ref{alsec}, we move the endpoints of the ties along strands and rearrange them by placing them on arcs that go downwards. Recall that the braiding process in $H_g$ does not involve the fixed part $I_g$. Thus, using the transparency property of the ties and the fact that the moves involved in the braiding algorithm (that resemble the $L$-moves; see Figure~\ref{ahg}) do not change the partition on the set of the components of the mixed link, we can guarantee that the braiding algorithm will not affect the partition defined by the ties. The result follows.

\begin{figure}[ht]
\begin{center}
\includegraphics[width=4in]{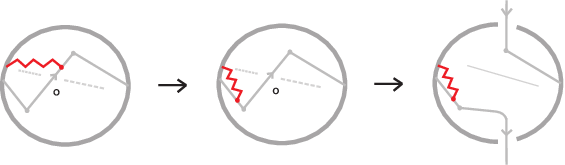}
\end{center}
\caption{Moves involved in the braiding algorithm.}
\label{ahg}
\end{figure}

\bigbreak
\item[$S^3\backslash \hat{I}_g$:] For tied links in $S^3\backslash \hat{I}_g$ the idea is the same, but before applying the braiding process mentioned above, we first need to move the tied link in the complement of an arc at infinity that realizes the closure of $I_g$ in the same way we did for the case of mixed links in $S^3\backslash \hat{I}_g$.
\end{itemize}
\end{proof}

We now proceed in stating and proving the analogue of the Markov theorem for tied links in $M$. For that we need the following definition:

\begin{defn}\label{simeq}\rm
Two tied braids in $TM_{g, n}$ are $\sim_H$-equivalent if one can be obtained from the other by applying a finite sequence of the following moves:
\[
\begin{array}{llclcll}
a. & {\rm Markov\ Conjugation} & : &  \alpha & \sim & \sigma^{\pm 1}\, \alpha\, \sigma^{\mp 1}, & {\rm for\ all}\ \alpha \in TM_{g,n},\\
&&&&&&\\
b. & {\rm Markov\ Stabilization} & : &  \alpha & \sim & \alpha\, \sigma_n^{\pm 1}, & {\rm for\ all}\ \alpha \in TM_{g,n},\\
&&&&&&\\
c. & {\rm Ties} & : &  \alpha & \sim & \alpha\, \eta_{i, j}, & {\rm for\ all}\ \alpha \in TM_{g,n}\ :\ s_{\alpha}(i)=j,\\
&&&&&&\\
d. & {\rm Fixed\ Ties} & : & \alpha & \sim & \phi_{i, j}\, \alpha, & {\rm whenever} \ s_{\alpha}(i)=j\ \&\ \alpha\ {\rm contains\ a}\ \phi_{k, i}.\\
\end{array}
\]

\noindent If moreover two tied braids in $TM_{g, n}$ differ by a finite sequence of the moves (a, b, c, d) together with
\[
\begin{array}{llclcll}
e. & {\rm Loop\ Conjugation} & : &  \alpha & \sim & a_i^{\pm 1}\, \alpha\, a_i^{\mp 1}, & {\rm for\ all}\ \alpha \in TM_{g,n},\\
\end{array}
\]
\noindent then the tied braids are called $\sim_U$-equivalent.
\smallbreak
\end{defn}

\begin{thm}[{\bf The analogue of the Markov Theorem in} $H_g\, \& \, S^3\backslash \hat{I}_g$] \label{markovhg}
Let $\alpha_1, \alpha_2$ be tied braids in $TM_{g, n}$. Then, the links $L_1 = \widehat{\alpha_1}$ and $L_2 = \widehat{\alpha_2}$ are tie isotopic:
\smallbreak

\begin{itemize}
\item[i.] in $H_g$ if and only if $\alpha_1\, \sim_H\, \alpha_2$ and 
\smallbreak
\item[ii.] in $S^3\backslash \hat{I}_g$ if and only if $\alpha_1\, \sim_U\, \alpha_2$. 
\end{itemize}
\end{thm}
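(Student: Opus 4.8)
The plan is to derive Theorem~\ref{markovhg} from the geometric Markov theorem for $M$ (Theorem~\ref{geommarkov}), the algebraic one (Theorem~\ref{algcco}), and the Mobility property (Proposition~\ref{mobprop}), combining them with the argument patterns already used in Theorems~\ref{marktls3}, \ref{marktlst} and \ref{markovlens}. The overall strategy is: first forget the ties and work purely with the underlying mixed braids, invoking the classical results to handle the non-tied moves; then recover the tie data by tracking what the geometric moves do to the partition of the set of components. I would split the two directions of each ``if and only if''.

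\textbf{The ``if'' (sufficiency) direction.} Here I would simply check that each of the moves (a), (b), (c), (d) — and (e) for $S^3\backslash\hat I_g$ — preserves tie isotopy of the closures. Markov conjugation (a) and stabilization (b) are tie isotopies because, by Theorem~\ref{geommarkov} and Theorem~\ref{algcco}, the underlying mixed links are isotopic, and neither move creates or destroys ties, so the partition on components is unchanged (each stabilization at worst merges the new trivial strand into an existing class, but a fresh strand closed by $\sigma_n^{\pm1}$ is in the same component as its neighbour, so the partition of \emph{components} is literally the same). Move (e), loop conjugation, is legitimate precisely in $S^3\backslash\hat I_g$ by relation~(3) of Theorem~\ref{algcco}, again leaving ties untouched. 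For (c) and (d) I would argue as in Theorem~\ref{marktls3}, \ref{marktlst}: when $s_\alpha(i)=j$ the $i$th and $j$th strands of $\widehat\alpha$ lie on the \emph{same} component, so adjoining $\eta_{i,j}$ (resp. premultiplying by $\phi_{i,j}$, given a $\phi_{k,i}$ is already present so that by Lemma~\ref{gtrel}(iii$'$) we get $\eta_{i,j}\phi_{k,i}=\phi_{k,j}\phi_{k,i}$) adds only a non-essential tie and does not refine the partition of components. Hence all moves descend to tie isotopies.

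\textbf{The ``only if'' (necessity) direction.} Suppose $\widehat{\alpha_1}\sim\widehat{\alpha_2}$ as tied links in $M$. By Proposition~\ref{mobprop} write $\alpha_k=\beta_k\gamma_k$ with $\beta_k\in B_{g,n}$ and $\gamma_k\in TB_{g,n}^{\sim}$; the partition determined by $\gamma_k$ equals the partition the ties induce on the components of $\widehat{\alpha_k}$. Forgetting ties, $\widehat{\beta_1}$ and $\widehat{\beta_2}$ are isotopic mixed links in $M$, so by Theorem~\ref{algcco} (after passing to $B_{g,\infty}$ and using the remark that relations (1),(2) may be taken as algebraic $L$-moves) $\beta_1$ and $\beta_2$ are connected by a finite sequence of Markov moves, Markov conjugations and — only in the $S^3\backslash\hat I_g$ case — loop conjugations. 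I would now lift this sequence to the tied setting: run exactly the same sequence of moves (a),(b),(e) on $\alpha_1=\beta_1\gamma_1$, carrying the ties $\gamma_1$ along. By the commutation relations of Definition~\ref{tmbmg}, Lemma~\ref{gtrel} and Relations~\ref{eqf} (generalized ties and fixed ties slide through braiding generators and loop generators, changing only their indices via the permutation), the ties can always be kept at the bottom, so after the sequence we reach a tied braid $\beta_2\gamma_1'$ where $\gamma_1'$ is $\gamma_1$ with its indices relabelled by the composite permutation. Since the composite permutation sends the $\beta_1$-partition to the $\beta_2$-partition, and since (by hypothesis) $\gamma_1$ and $\gamma_2$ induce the \emph{same} partition on components of the isotopic links $\widehat{\alpha_1}\sim\widehat{\alpha_2}$, the tie-sets $\gamma_1'$ and $\gamma_2$ define the same partition of the strands of $\beta_2$. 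Finally, by Proposition~\ref{propaj}(ii)--(iii) type reasoning (two tie-sets inducing the same partition are connected using only ``Ties'' moves (c) via the identities $\eta_{i,k}\eta_{k,m}=\eta_{i,k}\eta_{i,m}$ etc., and analogously for fixed ties using Lemma~\ref{gtrel}(iii$'$) for move (d)), $\beta_2\gamma_1'\sim\beta_2\gamma_2=\alpha_2$. Chaining everything, $\alpha_1\sim_H\alpha_2$ (resp. $\sim_U\alpha_2$).

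\textbf{Main obstacle.} The delicate point is the bookkeeping in the last step: showing that \emph{any} two tie-systems in $TB_{g,n}^{\sim}$ that induce the same partition of the strands are joined using only moves (c) and (d), especially when fixed ties $\phi_{i,j}$ and ordinary ties $\eta_{i,j}$ interact — one must repeatedly use Lemma~\ref{gtrel}(iii$'$), $\eta_{i,j}\phi_{k,i}=\phi_{k,i}\phi_{k,j}=\phi_{k,j}\eta_{i,j}$, to convert between ``a tie within a class'' and ``a fixed tie to that class,'' and check the side condition that the requisite $\phi_{k,i}$ is available (which follows because the class already touches the fixed strand, or does not, and the two sub-cases are handled separately). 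A secondary subtlety, exactly as in the untied case treated in \S5 of \cite{OL}, is that in $H_g$ certain loop conjugations are actually needed geometrically but are \emph{not} among the allowed moves (a)--(d); here one must invoke the fact recalled in the remark after Theorem~\ref{algcco} that those particular loop conjugations are already generated by Markov moves and conjugations in $B_{g,n}$, and check that the ties ride along under that translation without changing the induced partition — which again reduces to the slide relations of Definition~\ref{tmbmg} and Lemma~\ref{gtrel}.
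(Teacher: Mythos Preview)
Your proposal is correct and follows essentially the same route as the paper: separate the ties from the underlying braid via the Mobility property (Proposition~\ref{mobprop}), invoke the classical algebraic Markov theorem (Theorem~\ref{algcco}) on the braid part, and then reconcile the two tie-systems using moves (c) and (d) via the saturation-type identities (the paper phrases this last step as $\gamma_1\lambda=\gamma\lambda$ with $\lambda=\prod_{s_{\beta_1}(i)}\eta_{i,j}$ and $\omega\lambda\gamma_1=\omega\lambda\gamma$ with $\omega=\prod\omega_{c_j}$, which is exactly your ``two tie-sets inducing the same partition are joined by (c),(d)''). Your write-up is in fact more thorough than the paper's, since you treat the ``if'' direction explicitly and flag the $H_g$ loop-conjugation subtlety, both of which the paper leaves implicit.
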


\begin{proof}
Let $L_1, L_2$ be two tie isotopic links in $H_g$ (cor. $S^3\backslash \hat{I}_g$) and let $B_1, B_2$ the corresponding tied mixed braids. We prove that $B_1\, \sim_{H}\, B_2$ (cor. $B_1\, \sim_{U}\, B_2$). By Proposition~\ref{mobprop} we have that $B_1=\beta_1\, \gamma_1$ and $B_2=\beta_2\, \gamma_2$, where $\beta_1, \beta_2\in B_{g, n}$ and $\gamma_1, \gamma_2 \in TM_{g, n}^{\sim}$.

\smallbreak

Since the links $L_1, L_2$ are tie isotopic, we have that they are isotopic by ignoring the ties. Thus, we have that $B_1$ is equivalent to $B_2$ (by ignoring the ties again) and we can obtain $B_1$ from $B_2$ via a sequence of Definition~\ref{simeq} (a, b)-moves for $H_g$ and Definition~\ref{simeq} (a, b, e)-moves for $S^3\backslash \hat{I}_g$. Thus, $B_2$ can be written as $\beta_1\, \gamma$. We now deal with ties. By definition, the ties of $B_1$ and $B_2$, define the same partition of the corresponding mixed link components $C$. Thus, it suffices to prove that $\gamma_1$ is related to $\gamma$ via a sequence of Definition~\ref{simeq} (c, d)-moves. We consider generalized ties and fixed generalized ties separately:

\smallbreak

\begin{itemize}
\item[$\eta_{i, j}$:] For the generalized ties, as shown in \cite{AJ1} Theorem~3.7, we have that $\gamma_1\, \lambda=\, \gamma\, \lambda$, where $\lambda\, =\, \underset{s_{\beta_1}(i)}{\prod}{\eta_{i, j}}$. Thus, by Definition~\ref{simeq} (c)-moves, we have that $\gamma_1\, \sim_{H}\, \gamma$ and $\gamma_1\, \sim_U\, \gamma$.
\smallbreak
\item[$\phi_{k, m}$:] Similarly to \cite{F} Theorem~5, for generalized fixed ties, if $\gamma_1$ contains $\phi_{i, j}$, since $\gamma_1$ and $\gamma$ define the same partition in the set of components of $B_1$, $\gamma$ should contain either $\phi_{i, j}$ or $\phi_{i, k}$, for some $k$ in the cycle of $s_{\beta_1}$ that contains $j$, say $c_j$. For a cycle $c_m$ let
\[
\omega_{c_m}\, :=\, \underset{j\in c_m}{\prod}{\phi_{i, j}}\quad {\rm and}\quad \omega\, =\, \underset{\phi_{i, j}\in \gamma_1}{\prod}{\omega_{c_j}}.
\]
We have that $\omega\, \lambda\, \gamma_1\, =\, \omega\, \lambda\, \gamma$, and thus, by Definition~\ref{simeq} (c, d)-moves, we have that $\gamma_1\, \sim_{H}\, \gamma$ and $\gamma_1\, \sim_U\, \gamma$.
\end{itemize}

\end{proof}

\begin{remark}\rm
\begin{itemize}
\item[i.] Note that from the discussion on $L$-moves, it follows that moves ($a$) and ($b$) of Definition~\ref{simeq}, i.e. conjugation and stabilization moves, may be replaced by $L$-moves for tied mixed braids (recall Remark~\ref{lmhg}, Theorems~\ref{lmtbs3}, \ref{lmtbs4}), leading to an $L$-move braid equivalence for tied links in $M$.
\smallbreak
\item[ii.] In \S~6 \cite{AJ}, the authors present a mechanism for constructing {\it tied algebras} from tied braid monoids, which may lead to Jones type invariants for tied links in 3-manifolds. This is the subject of a sequel paper.
\end{itemize}
\end{remark}

\section{On tied links in knot complements}\label{tlknotc}

Tied links can be naturally generalized to knot complements and other c.c.o. 3-manifolds. In order to obtain the algebraic analogue of the Markov theorem for tied links in knot complements $M$, we start from tied mixed links in $S^3$ and using the braiding method that we described for $S^3\backslash \hat{I}_g$ Theorem~\ref{alextlm}, we obtain a tied geometric mixed braid. Note now that the fixed part of the geometric mixed braid does not necessarily consist of the identity braid $I_g$. Let $B$ denote the fixed part. Then, as in the case of $S^3\backslash \hat{I}_g$, we apply the technique of parting in order to separate the strands of the mixed braid into two sets: the fixed part $B$ and the moving part, say $L$. We call the result a parted mixed braid. Then, using the technique of {\it combing}, we separate the braiding of the fixed subbraid $B$ from the braiding of the moving strands using mixed braid isotopy (see Figure~\ref{partingandcombing} for an example of parting and combing a geometric mixed braid).

\begin{figure}[ht]
\begin{center}
\includegraphics[width=5.7in]{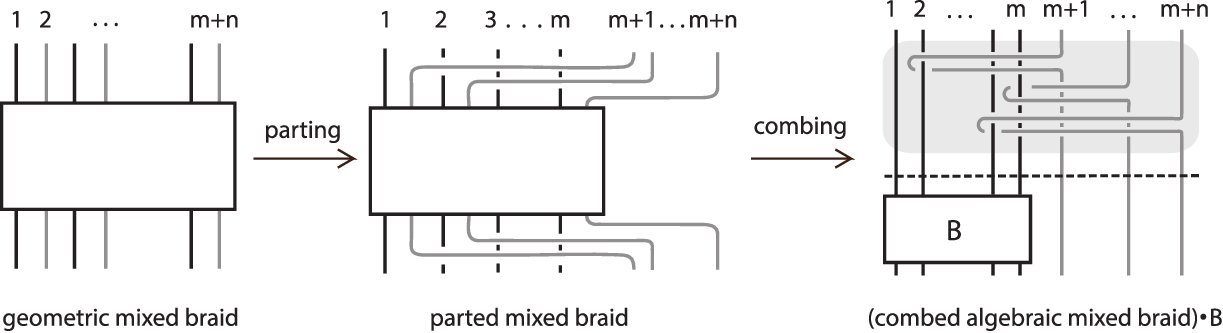}
\end{center}
\caption{ Parting and combing a geometric mixed braid. }
\label{partingandcombing}
\end{figure}

Let now $\Sigma_k$ denote the crossing between the $k^{th}$ and the $(k+1)^{st}$ strand of the fixed subbraid. Then, for all $j=1,\ldots,n-1$ and $k=1,\ldots,m-1$ we have: $\Sigma_k \sigma_j = \sigma_j \Sigma_k$. Thus, the only generating elements of the moving part that are affected by the combing are the loops $a_i$ as illustrated in Figure~\ref{comb}. 

\begin{figure}[ht]
\begin{center}
\includegraphics[width=5.6in]{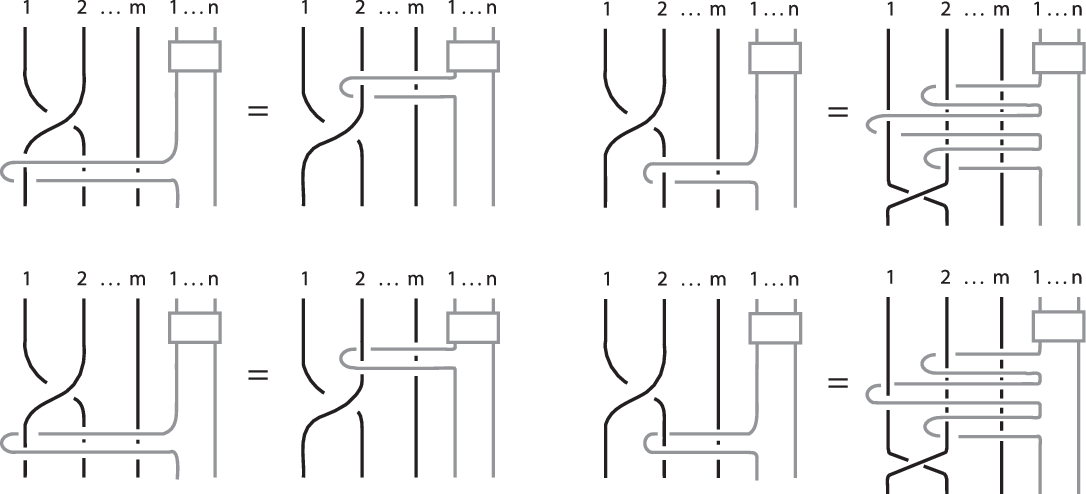}
\end{center}
\caption{ Combing. }
\label{comb}
\end{figure}

In order now to obtain tie braid equivalence for tied links in $M$, we need to take into consideration the following relations (see Figure~\ref{combft}):

\begin{equation}
\Sigma_i\, \phi_{i, m}\ =\ \phi_{i+1, m}\, \Sigma_i,\qquad \Sigma_i\, \phi_{i+1, m}\ =\ \phi_{i, m}\, \Sigma_i
\end{equation}

\begin{figure}[ht]
\begin{center}
\includegraphics[width=4in]{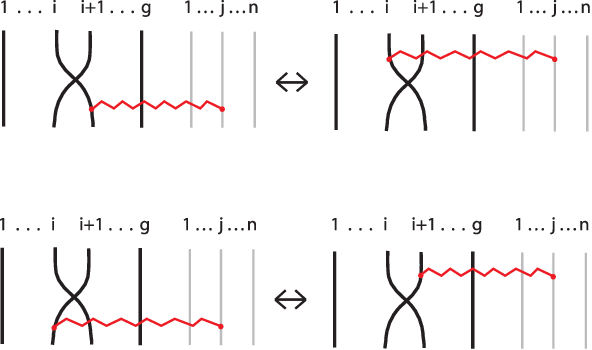}
\end{center}
\caption{ Combing the fixed generalized ties. }
\label{combft}
\end{figure}

\noindent and this is the subject of a sequel paper, i.e. tied links in knot complements, which is the first step toward studying tied links in 3-manifolds obtained from $S^3$ by (rational) surgery along a knot.

\end{document}